\newtheorem{proposition}{Proposition}
\newenvironment{definition}
{\smallskip\noindent{\bf Definition\/}:}{\smallskip\par}
\newenvironment{statement}
{\smallskip\noindent{\bf Statement\/}.}{\smallskip\par}
\newenvironment{example}
{\smallskip\noindent{\bf Example\/}.}{\smallskip\par}
\newenvironment{remark}
{\smallskip\noindent{\bf Remark\/}.}{\smallskip\par}
\newenvironment{proof}
{\noindent{\it Proof\/}.}{{ \hfill $\Box$}\smallskip\par}
\newcommand{\CC}{{\Bbb C}}
\newcommand{\TT}{{\Bbb T}}
\newcommand{\calO}{{\cal O}}
\newcommand{\calA}{{\cal A}}
\newcommand{\calF}{{\cal F}}
\newcommand{\eps}{\varepsilon}
\newcommand{\ind}{{\rm ind}}
\newcommand{\rk}{{\rm rk\,}}
\newcommand{\Mmn}{M_{m,n}}
\newcommand{\indPH}{{\rm ind}_{\rm PH}^i \,}
\newcommand{\indPHN}{{\rm ind}_{\rm PHN}\,}
\newcommand{\indPHk}{{\rm ind}_{\rm PH}^k \,}
\newcommand{\indrad}{{\rm ind}_{\rm rad}\,}
\newcommand{\indhom}{{\rm ind}_{\rm hom}\,}
\newcommand{\indGMvS}{{\rm ind}_{\rm GMvS}\,}
\newcommand{\indPHs}{{\rm ind}_{\rm PH}\,}
\title{On indices of 1-forms on determinantal singularities}
\author{W.~Ebeling and S.~M.~Gusein-Zade
\thanks{Partially supported by the DFG (Eb 102/5--1), INTAS-05-7805, RFBR--07--01--00593,
NWO-RFBR 047.011.2004.026.
Keywords: determinantal variety, 1-form, index.
AMS Math. Subject Classification: 14B05, 14M12, 58A10.
}
}
\date{}
\begin{document}

\maketitle

\begin{abstract}
We consider 1-forms on, so called, essentially isolated determinantal singularities (a natural generalization of isolated ones), show how to define analogues of the Poincar\'e--Hopf index for them, and describe relations between these indices and the radial index. For isolated determinantal singularities, we discuss properties of the homological index of a holomorphic 1-form and its relation with the Poincar\'e--Hopf index.
\end{abstract}

\section*{Introduction}

There are several generalizations of the notion of the index of a singular point (zero) of a vector field or of a 1-form on a smooth manifold (real or complex analytic) to vector fields or 1-forms on singular varieties. Some of them (the radial index, the Euler obstruction) are defined for arbitrary varieties. The homological index is defined for holomorphic vector fields or 1-forms on isolated singularities. The GSV (G\'omez-Mont--Seade--Verjovsky) index or PH (Poincar\'e--Hopf) index is defined for isolated complete intersection singularities: {ICIS}. For definitions and properties of these indices see \cite{EGSch} and the references there. This paper emerged from an attempt to generalize the notion(s) of the GSV- or PH-indices from complete intersections to more general varieties. It is not clear how this can be done in the general setting. 
The GSV-index of a vector field or of a 1-form on an ICIS can be considered as a (duely understood) localization of the top Chern-Fulton (or Chern-Fulton-Johnson) class of a variety with isolated complete intersection singularities. It seems that in general such a localization is not well-defined (at least as an appropriate index of a vector field or of a 1-form).

A more general class than complete intersection singularities is the class of determinantal singularities which are defined by vanishing of all the minors of a certain size of an $m \times n$-matrix. We consider so called essentially isolated determinantal singularities (EIDS) as a natural generalization of isolated ones. For a 1-form on an EIDS, we define several analogues of the Poincar\'e--Hopf index using natural resolutions of deformations of the EIDS. We determine relations between these indices and the radial index. For isolated determinantal singularities, we discuss properties of the homological index of a holomorphic 1-form and its relation with the Poincar\'e--Hopf index.

\section{Determinantal singularities}
A determinantal variety $X$ (of type $(m,n,t)$) in an open domain $U\subset\CC^N$ is a variety of dimension $d:=N-(n-t+1)(m-t+1) $ defined by the condition $\rk F(x) < t$ where $t\le \min(m, n)$, $F(x)=\left(f_{ij}(x)\right)$ is an $m\times n$-matrix ($i=1,\ldots, m$, $j=1,\ldots, n$), whose entries $f_{ij}(x)$  are complex analytic functions on $U$. In other words, $X$ is defined by the equations $m^t_{IJ}(x)=0$ for all subsets $I\subset\{1, \ldots, m\}$, $J\subset\{1, \ldots, n\}$ with $t$ elements, where $m^t_{IJ}(x)$ is the corresponding $t\times t$-minor of the matrix $F(x)$. 

This definition can be reformulated in the following way. Let $\Mmn \cong \CC^{mn}$ be the space of $m \times n$-matrices with complex entries and let $\Mmn^t$ be the subset of $\Mmn$ consisting of matrices of rank less than $t$, i.e.\ of matrices all $t \times t$-minors of which vanish. The variety $\Mmn^t$ has codimension $(m-t+1)(n-t+1)$ in $M_{m,n}$. It is singular. The singular locus of $\Mmn^t$ coincides with $\Mmn^{t-1}$. The singular locus of the latter one coincides with $\Mmn^{t-2}$, etc.\ (see, e.g., \cite{ACGH}). 
The representation of the variety $\Mmn^{t}$ as the union of $\Mmn^{i}\setminus\Mmn^{i-1}$, $i=1, \ldots, t$, is a Whitney stratification of $\Mmn^{t}$.
The matrix $F(x)=(f_{ij}(x))$, $x \in \CC^N$, determines a map $F: U \to \Mmn$ and the determinantal variety $X$ is the preimage $F^{-1}(\Mmn^t)$ of the variety $\Mmn^t$ (subject to the condition that ${\rm codim}\, X = {\rm codim}\, \Mmn^t$).
For $1\le i\le t$, let $X_i=F^{-1}(\Mmn^{i})$.

The image of a generic map $F: U \to \Mmn$ may intersect the varieties $\Mmn^{i}$ for $i < t$ (therefore the corresponding determinantal variety $F^{-1}(\Mmn^t)$ will have singularities). However, a generic map $F$ intersects the strata $\Mmn^{i} \setminus \Mmn^{i-1}$ of the variety $\Mmn^t$ transversally. This means that, at the corresponding points, the determinantal variety has ''standard'' singularities whose analytic type only depends on $i= {\rm rk}\, F(x) +1$. This inspires the following definition:

\begin{definition} A point $x \in X=F^{-1}(\Mmn^t)$ is called {\it essentially non-singular} if, at the point $x$, the map $F$ is transversal to the corresponding stratum of the variety $\Mmn^t$ (i.e., to $\Mmn^{i} \setminus \Mmn^{i-1}$ where $i= {\rm rk} \, F(x)+1$). 
\end{definition}

At an essentially non-singular point $x_0\in X$ with ${\rm rk} \, F(x_0)=i-1$, 
$${\rm rk} \{dm_{IJ}^{i} \} = (m-i+1)(n-i+1)$$
($\{dm_{IJ}^{i} \}$ is the set of the differentials of all the $i \times i$-minors of the matrix $F(x)$) and, in a neighbourhood of the point $x_0$,
the subvariety $X_i \setminus X_{i-1} = \{ x \in X \, | \, {\rm rk} \, F(x)=i-1\}$ is non-singular of dimension $N-(m-i+1)(n-i+1)$. At an essentially singular point $x \in X$ one has
$${\rm rk} \{dm_{IJ}^{i} \} < (m-i+1)(n-i+1)\,.$$

\begin{definition}
A germ $(X,0)\subset(\CC^N,0)$ of a determinantal variety of type $(m,n,t)$ has an {\em isolated essentially singular point} at the origin (or is an {\it essentially isolated determinantal singularity}: {EIDS}) if it has only essentially non-singular points in a punctured neighbourhood of the origin in $X$.
\end{definition}

An example of an {EIDS} is an isolated complete intersection singularity ({ICIS}): it is an {EIDS} of type $(1,n,1)$. 

From now on we shall suppose that a determinantal singularity $(X,0)$ of type $(m,n,t)$ is defined by a map $F: (\CC^N,0) \to \Mmn$ with $F(0)=0$. This is not a restriction because if $F(0) \neq 0$ and therefore ${\rm rk} \, F(0)=s>0$, $(X,0)$ is a determinantal singularity of type $(m-s,n-s,t-s)$ defined by a map $F': (\CC^N,0) \to M_{m-s,n-s}$ with $F'(0)=0$. We also suppose that $\dim X >0$, i.e.\ $N>(m-t+1)(n-t+1)$. Let $\eps>0$ be such that, for all positive $\eps'\le \eps$, each stratum $X_i\setminus X_{i-1}$ of the variety $X$, $1\le i \le t$, is transversal to the sphere $S_{\eps'}=\partial B_{\eps'}$ of radius $\eps'$ centred at the origin in $\CC^N$. We shall suppose deformations $\widetilde X$ of the EIDS 
$(X,0)$ being defined in a neighbourhood $U$ of the ball $B_\eps$ and being such that the corresponding strata stay transversal to the sphere $S_\eps$.

An essentially isolated determinantal singularity 
$(X,0) \subset (\CC^N,0)$ of type $(m,n,t)$ (defined by a map $F: (\CC^N,0) \to (\Mmn, 0)$) has an isolated singularity at the origin if and only if $N \leq (m-t+2)(n-t+2)$.

In the sequel we shall consider deformations (in particular, smoothings) of determinantal singularities which are themselves determinantal ones, i.e., they are defined by perturbations of the map $F$ defining the singularity.

\begin{remark} Determinantal singularities (in particular isolated ones) may have deformations (even smoothings) which are not determinantal (see an example in \cite{Pi}). These smoothings may have different topology, say, different Euler characteristics.
\end{remark}

Let $(X,0) \subset (\CC^N,0)$ be an EIDS defined by a map $F: (\CC^N,0) \to (\Mmn,0)$ ($X=F^{-1}(\Mmn^t)$, $F$ is transversal to $\Mmn^{i} \setminus \Mmn^{i-1}$ at all points $x$ from a punctured neighbourhood of the origin in $\CC^N$ and for all $i\le t$).

\begin{definition} An {\em essential smoothing} $\widetilde{X}$ of the EIDS $(X,0)$ is a subvariety of a neighbourhood $U$ of the origin in $\CC^N$ defined by a perturbation $\widetilde{F}: U \to \Mmn$ of the germ $F$ transversal to all the strata $\Mmn^{i} \setminus \Mmn^{i-1}$ with $i \leq t$.
\end{definition}

A generic deformation $\widetilde{F}$ of the map $F$ defines an essential smoothing of the EIDS $(X,0)$ (according to the Thom Transversality Theorem). An essential smoothing is in general not smooth (for $N \geq (m-t+2)(n-t+2)$). Its singular locus is $\widetilde{F}^{-1}(\Mmn^{t-1})$, the singular locus of the latter one is $\widetilde{F}^{-1}(\Mmn^{t-2})$, etc. The representation of $\widetilde{X}$ as the union 
$$\widetilde{X} = \bigcup_{1 \leq i \leq t}  \widetilde{F}^{-1}(\Mmn^{i} \setminus \Mmn^{i-1})$$
is a Whitney stratification of it. An essential smoothing of an EIDS $(X,0)$ of type $(m,n,t)$ is a genuine smoothing if and only if $N < (m-t+2)(n-t+2)$.

One can say that the variety $\Mmn^t$ has three natural resolutions. 

One of them is constructed in the following way (see, e.g., \cite{ACGH}). 
Let ${\rm Gr}(k,n)$ be the Grassmann manifold of $k$-dimensional vector subspaces in $\CC^n$. 
Consider $m \times n$-matrices as linear maps from $\CC^n$ to $\CC^m$.
Let $Y_1$
be the subvariety of the product $M_{m,n}\times {\rm Gr}(n-t+1,n)$ which consists of pairs $(A, W)$ such that $A(W)=0$. The variety $Y_1$
is smooth and connected. Its projection to the first factor defines a resolution $\pi_1: Y_1 \to \Mmn^t$ of the variety $\Mmn^t$. 

If one considers $m \times n$-matrices as linear maps from $\CC^m$ to $\CC^n$, one gets in the same way another resolution $\pi_2: Y_2 \to \Mmn^t$ of the variety $\Mmn^t$. In this case one has $Y_2 \subset \Mmn \times {\rm Gr}(m-t+1,m)$.

The third natural resolution is given by the Nash transform $\widehat{\Mmn^t}$ of the variety $\Mmn^t \subset \Mmn$. The Nash transform $\widehat{Z}$ of a variety $Z \subset U \subset \CC^N$ of pure dimension $d$ is the closure in the product $Z \times {\rm Gr}(d,N)$ of the set 
$$\{ (x,W) \, | \, x \in Z_{\rm reg}, W=T_xZ_{\rm reg} \}\,,$$
where $Z_{\rm reg}$ is the set of non-singular points of the variety $Z$. The Nash transform may in general be singular. The tautological vector bundle $\TT$ over the Nash transform $\widehat{Z}$ is the pull-back of the tautological bundle (of rank $d$) over the Grassmann manifold ${\rm Gr}(d,N)$ (under the projection $Z \times {\rm Gr}(d,N) \to {\rm Gr}(d,N)$). Over the preimage of the non-singular part $Z_{\rm reg}$ of the variety $Z$ the tautological bundle is in a natural way isomorphic to the tangent bundle. However, if the Nash transform $\widehat{Z}$ is by a chance non-singular, the tautological bundle and the tangent bundle are in general different.

Let $d_{m,n}^t:=\dim \Mmn^t = mn-(m-t+1)(n-t+1)$ and let ${\rm Gr}(d_{m,n}^t,\Mmn)$ be the Grassmann manifold of $d_{m,n}^t$-dimensional subspaces of $\Mmn$. The Nash transform $\widehat{\Mmn^t}$ is the closure in $\Mmn^t \times {\rm Gr}(d_{m,n}^t,\Mmn)$ of the set
$$\{(A,T) \, | \, A \in \Mmn^t \setminus \Mmn^{t-1}=(\Mmn^t)_{\rm reg}, T=T_A\Mmn^t \}.$$
As above, let us consider matrices $A \in \Mmn^t$ as operators $A: \CC^n \to \CC^m$. The tangent space to $\Mmn^t$ at a point $A$ with ${\rm rk}\, A =t-1$ is 
$$T_A\Mmn^t=\{ B \in \Mmn \, | \, B(\ker A) \subset {\rm im}\, A\}$$
(see, e.g., \cite{ACGH}). Let $\alpha: {\rm Gr}(n-t+1,n) \times {\rm Gr}(t-1,m) \to {\rm Gr}(d_{m,n}^t,\Mmn)$ be the map which sends a pair $(W_1, W_2)$ ($W_1 \subset \CC^n$, $W_2 \subset \CC^m$, $\dim W_1=n-t+1$, $\dim W_2=t-1$) to the $d_{m,n}^t$-dimensional vector subspace 
$$\{ B \in \Mmn \, | \, B(W_1) \subset W_2 \} \subset \Mmn.$$
This map is an embedding. The closure of the set of matrices (operators) $A$ with $\ker A=W_1$, ${\rm im}\, A=W_2$ ($(W_1,W_2) \in {\rm Gr}(n-t+1,n) \times {\rm Gr}(t-1,m)$) consists of all operators $A$ with $\ker A \supset W_1$ and ${\rm im}\, A \subset W_2$. Therefore the Nash transform $\widehat{\Mmn^t}$ of $\Mmn^t$ consists of pairs $(A,W)$ such that $W=\alpha(W_1,W_2)$, $\ker A \supset W_1$, ${\rm im}\, A \subset W_2$. This means that the projection of $\widehat{\Mmn^t} \subset \Mmn^t \times {\rm Gr}(d_{m,n}^t,\Mmn)$ to the second factor is the projection of a vector bundle of rank $(t-1)^2$ over $\alpha({\rm Gr}(n-t+1,n) \times {\rm Gr}(t-1,m))$. Thus the Nash transform gives a resolution $\pi_3: Y_3:=\widehat{\Mmn^t} \to \Mmn^t$ of the variety $\Mmn^t$.

\section{Poincar\'e--Hopf indices of 1-forms on EIDS}
Let $(X,0)=F^{-1}(\Mmn^t) \subset (\CC^N,0)$ be an EIDS and let $\omega$ be a germ of a (complex) 1-form on $(\CC^N,0)$ whose restriction to $(X,0)$ has an isolated singular point (zero) at the origin. This means that the restrictions of the 1-form $\omega$ to the strata $X_i\setminus X_{i-1}$, $i\le t$,
have no zeros in a punctured neighbourhood of the origin.

If $(X,0)$ is an ICIS (i.e. an EIDS of type $(1,n,1)$), the PH-index $\ind_{\rm PH} \, \omega$ of the 1-form on it is defined as the sum of the indices of the zeros of the restriction of the 1-form $\omega$ to a smoothing $\widetilde{X}$ of the ICIS $(X,0)$ in a neighbourhood of the origin.

An essential smoothing $\widetilde{X} \subset U$ of the EIDS $(X,0)$ (in a neighbourhood $U$ of the origin in $\CC^N$) is in general not smooth. To define an analogue of the PH-index one has to construct a substitute of the tangent bundle to $\widetilde{X}$. It is possible to use one of the following two natural ways.

One possibility is to use a resolution of the variety $\widetilde{X}$ connected with one of the three resolutions of the variety $\Mmn^t$ described above. Let $\pi_i: Y_i \to \Mmn^t$ be one of the described resolutions of the determinantal variety $\Mmn^t$ and let $\overline{X}_i = Y_i \times_{\Mmn^t} \widetilde{X}$, $i=1,2,3$, be the fibre product of the spaces $Y_i$ and $\widetilde{X}$ over the variety $\Mmn^t$:
$$\diagram
 & \overline{X}_i \ar[dl]_{\Pi_i} \ar[dd] \ar[dr]  &\\
\widetilde{X} \ar[dr]_{\widetilde{F}|_{\widetilde{X}} } & & Y_i \ar[dl]^{\pi_i} \\
& \Mmn^t &
\enddiagram
$$
The map $\Pi_i : \overline{X}_i \to \widetilde{X}$ is a resolution of the variety $ \widetilde{X}$. The lifting $\omega_i:= (j \circ \Pi_i)^\ast \omega$ ($j$ is the inclusion map $\widetilde{X} \hookrightarrow U \subset \CC^N$) of the 1-form $\omega$ is a 1-form on a (non-singular) complex analytic manifold $\overline{X}_i$ without zeros outside of the preimage of a small neighbourhood of the origin. In general, the 1-form $\omega_i$ has non-isolated zeros.

\begin{definition}
The {\em Poincar\'e--Hopf index} ({\em PH-index})
$\ind_{\rm PH}^i \, \omega = \ind_{\rm PH}^i \, (\omega;X,0)$, $i=1,2,3$,
of the 1-form $\omega$ on the EIDS $(X,0) \subset (\CC^N,0)$ is the sum of the indices of the zeros of a generic perturbation $\widetilde{\omega}_i$ of the 1-form $\omega_i$ on the manifold $\overline{X}_i$ (in the preimage of a neighbourhood of the origin in $\CC^N$). 
\end{definition}

In  other words, the PH-index $\indPH \omega$ is the obstruction to extend the non-zero 1-form $\omega_i$ from the preimage (under the map $j \circ \Pi_i$) of a neighbourhood of the sphere $S_\eps= \partial B_\eps$ to the manifold $\overline{X}_i$.

The other possibility is to take into account that the space $\overline{X}_3$ of the third resolution is the Nash transform of the variety $\widetilde{X}$ and to use the Nash bundle $\widehat{\TT}$ over it instead of the tangent bundle. This brings the idea of the Euler obstruction into play. 

The 1-form $\omega$ defines a non-vanishing section $\widehat{\omega}$ of the dual bundle $\widehat{\TT}^\ast$ over the preimage of the intersection $\widetilde{X} \cap S_\eps$ of the variety $\widetilde{X}$ with the sphere $S_\eps$.

\begin{sloppypar}

\begin{definition}
The {\em Poincar\'e--Hopf-Nash index} ({\em PHN-index}) $\indPHN \omega = \indPHN (\omega;X,0)$ of the 1-form $\omega$ on the EIDS $(X,0)$ is the obstruction to extend the non-zero section $\widehat{\omega}$ of the dual Nash bundle $\widehat{\TT}^\ast$ from the preimage of the boundary $S_\eps=\partial B_\eps$ of the ball $B_\eps$ to the preimage of its interior, i.e.\ to the manifold $\overline {X}_3$, more precisely, its value (as an element of
 $H^{2d}(\Pi_3^{-1}(\widetilde{X} \cap B_\eps), \Pi_3^{-1}(\widetilde{X} \cap S_\eps))$) on the fundamental class of the pair $(\Pi_3^{-1}(\widetilde{X} \cap B_\eps), \Pi_3^{-1}(\widetilde{X} \cap S_\eps))$.
\end{definition}

\end{sloppypar}

There is another possible definition of the PHN-index which follows from the property of the Euler obstruction described in \cite[Proposition~2.3]{STV} (see also \cite[Proposition~2.1]{EGChern}).

\begin{proposition}
The PHN-index $\indPHN \omega$ of the 1-form $\omega$ on the {EIDS} $(X,0)$ is equal to the number of non-degenerate singular points of a generic deformation $\widetilde\omega$ of the 1-form $\omega$ on the non-singular part $\widetilde{X}_{\rm reg}=\widetilde{F}^{-1}(\Mmn^t \setminus \Mmn^{t-1})$ of the variety $\widetilde X$.
\end{proposition}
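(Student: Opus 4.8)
The plan is to evaluate the obstruction defining $\indPHN\omega$ directly, by choosing a particularly convenient extension of the section $\widehat\omega$ over the interior and then applying a general-position argument; this is the $1$-form analogue of the description of the Euler obstruction via Morse points recorded in \cite[Proposition~2.3]{STV} and \cite[Proposition~2.1]{EGChern}, and the proof follows the same pattern. Recall the relevant structure: by construction $\Pi_3\colon\overline X_3\to\widetilde X$ is the Nash modification of $\widetilde X$; it is biholomorphic over the regular part $\widetilde X_{\rm reg}=\widetilde F^{-1}(\Mmn^t\setminus\Mmn^{t-1})$, and over $\Pi_3^{-1}(\widetilde X_{\rm reg})$ the Nash bundle $\widehat\TT$ is canonically identified with the tangent bundle of $\widetilde X_{\rm reg}$, the section $\widehat\omega$ going over to $\omega|_{\widetilde X_{\rm reg}}$. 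The set $\Pi_3^{-1}(\widetilde X_{\rm sing})=\Pi_3^{-1}(\widetilde F^{-1}(\Mmn^{t-1}))$ is a closed complex analytic subset of $\overline X_3$, nowhere dense (since $\widetilde X_{\rm sing}$ is nowhere dense in $\widetilde X$ and $\Pi_3$ is a modification), hence of complex dimension at most $d-1$, as is each stratum of a Whitney stratification of it.

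By obstruction theory the PHN-index $\indPHN\omega$ equals the sum of the indices of the zeros of any section $s$ of $\widehat\TT^\ast$ over $\Pi_3^{-1}(\widetilde X\cap B_\eps)$ which agrees up to homotopy with $\widehat\omega$ on $\Pi_3^{-1}(\widetilde X\cap S_\eps)$ and has only isolated zeros. For $s$ I would take the section $\widehat{\widetilde\omega}$ attached to a generic small deformation $\widetilde\omega$ of the germ $\omega$: at a point $(x,W)\in\overline X_3$, where $W\subset\CC^N$ is a $d$-dimensional subspace, $\widehat{\widetilde\omega}(x,W)$ is the restriction to $W$ of the covector $\widetilde\omega(x)$. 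If $\widetilde\omega$ is close enough to $\omega$, the transversality of the strata of $\widetilde X$ to $S_\eps$ keeps all zeros of $\widetilde\omega$ on every stratum inside $B_\eps$, so $\widehat{\widetilde\omega}$ is non-vanishing over $\Pi_3^{-1}(\widetilde X\cap S_\eps)$ and therefore computes the obstruction. Over $\Pi_3^{-1}(\widetilde X_{\rm reg})$ the section $\widehat{\widetilde\omega}$ is just $\widetilde\omega|_{\widetilde X_{\rm reg}}$, a holomorphic section of a holomorphic vector bundle, so for generic $\widetilde\omega$ it has finitely many zeros there, all non-degenerate and of index $+1$ --- precisely the non-degenerate singular points appearing in the statement.

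The remaining and main point is to arrange that for generic $\widetilde\omega$ the section $\widehat{\widetilde\omega}$ has no zeros over $\Pi_3^{-1}(\widetilde X_{\rm sing})$. The decisive remark is that replacing $\omega$ by $\omega+\ell$ for a constant covector $\ell\in(\CC^N)^\ast$ changes $\widehat{\widetilde\omega}(x,W)$ by $\ell|_W$, and the restriction map $(\CC^N)^\ast\to W^\ast$ is onto; hence the family $\{\widehat{\widetilde\omega}\}$ is rich enough that, by the parametric transversality theorem applied stratum by stratum to a Whitney stratification of $\Pi_3^{-1}(\widetilde X_{\rm sing})$, a generic $\widehat{\widetilde\omega}$ is transverse to the zero section of $\widehat\TT^\ast$ on each such stratum. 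Since vanishing of $\widehat{\widetilde\omega}$ imposes $d$ complex conditions while every such stratum has dimension at most $d-1$, transversality forces the zero set of $\widehat{\widetilde\omega}$ over $\Pi_3^{-1}(\widetilde X_{\rm sing})$ to be empty, and the finitely many zeros over $\widetilde X_{\rm reg}$ can simultaneously be kept non-degenerate. Consequently $\indPHN\omega$ is the sum of the indices of the zeros of $\widehat{\widetilde\omega}$, all of which lie over $\widetilde X_{\rm reg}$ and have index $+1$, i.e. their number, which is the assertion. The delicate step, as always in arguments of this kind, is controlling the lifted section over the exceptional fibres of the Nash modification, that is, checking that deformations of $\omega$ merely as a $1$-form on $\CC^N$ already suffice to achieve the required general position there.
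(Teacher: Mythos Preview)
Your argument is correct and is precisely the pattern the paper has in mind: in the paper this proposition is not given an independent proof but is simply attributed to \cite[Proposition~2.3]{STV} and \cite[Proposition~2.1]{EGChern}, whose content is exactly the general-position/obstruction argument you have spelled out (lift a generic perturbation to the Nash transform, use that the exceptional locus has complex dimension $\le d-1$ while vanishing of a section of $\widehat\TT^\ast$ imposes $d$ conditions, and that constant covectors already give enough freedom for transversality). So you have supplied the details behind the paper's citation rather than taken a different route.
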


All the defined indices (as well as the radial one and the local Euler obstruction) satisfy the law of conservation of number (see, e.g., \cite{EGSch}).

Let $\chi(X,0):=\chi(\widetilde{X} \cap B_\eps)$ for an essential smoothing $\widetilde{X}$ of the singularity $(X,0)$. Let us recall that $(X_i,0)=F^{-1}(\Mmn^i)$ , $i=1, \ldots , t$. The variety $\widetilde{X}_i \subset \widetilde{X}$ is an essential smoothing of the EIDS $(X_i,0)$ (of type $(m,n,i)$).

Let us denote by $G_i^k$ the preimage of a point from $\Mmn^i \setminus \Mmn^{i-1}$ under the resolution $\pi_k$, $k=1,2,3$, of the variety $\Mmn^t$, i.e.
\begin{eqnarray*}
G_i^1 & = & {\rm Gr}(n-t+1,n-i+1),\\
G_i^2 & = & {\rm Gr}(m-t+1,m-i+1), \\
G_i^3 & = & {\rm Gr}(n-t+1,n-i+1) \times {\rm Gr}(m-t+1,m-i+1).
\end{eqnarray*}
One has
\begin{eqnarray*}
\chi(G_i^1) & = &  \binom{n-i+1}{t-i}, \quad \chi(G_i^2)  =  \binom{m-i+1}{t-i}, \\
\chi(G_i^3) &  = &  \binom{n-i+1}{t-i} \binom{m-i+1}{t-i}.
\end{eqnarray*}

\begin{proposition} \label{PropPH}
One has
\begin{eqnarray*}
\indPHk (\omega;X,0) & = & (-1)^{\dim X} \sum_{i=1}^t  \left( (-1)^{\dim X_i} \indrad (\omega; X_i,0)  \right. \\
& & {}  - (-1)^{\dim X_{i-1}} \indrad(\omega; X_{i-1},0) \\
 & & \left. {} + (\chi(X_i,0) - \chi(X_{i-1},0)) \right) \cdot \chi(G_i^k).
\end{eqnarray*}
where we set $X_0=\{ 0\}$ $($and therefore $\indrad (\omega; X_0,0)=1${}$)$ and $\chi(X_0,0)=0$.
\end{proposition}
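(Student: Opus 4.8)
\medskip\noindent\emph{Proof strategy.}
The plan is to compute $\indPHk (\omega;X,0)$ by following the zeros of a generic perturbation of the lifted $1$-form through the resolution $\Pi_k\colon \overline{X}_k\to\widetilde{X}$ and grouping them according to the strata $\widetilde{X}_i\setminus\widetilde{X}_{i-1}$, $i=1,\dots,t$, of an essential smoothing $\widetilde{X}$. By definition $\indPHk (\omega;X,0)$ is the sum of the indices of the zeros of a generic perturbation $\widetilde{\omega}_k$ of $\omega_k=(j\circ\Pi_k)^\ast\omega$ lying over the ball $B_\eps$, and by the law of conservation of number this sum may be computed stratum by stratum. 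Since the restriction of $\omega$ to each stratum $X_i\setminus X_{i-1}$ has an isolated zero at the origin, the zero set of $\omega_k$ near $\Pi_k^{-1}(B_\eps)$ splits into finitely many closed pieces, each lying over one stratum $\widetilde{X}_i\setminus\widetilde{X}_{i-1}$; hence $\indPHk (\omega;X,0)=\sum_{i=1}^{t}\nu_i$, where $\nu_i$ collects the indices of the zeros of $\widetilde{\omega}_k$ sitting over the $i$-th stratum.

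Over $\widetilde{X}_i\setminus\widetilde{X}_{i-1}=\widetilde{F}^{-1}(\Mmn^i\setminus\Mmn^{i-1})$ the map $\Pi_k$ is a locally trivial fibration whose fibre is the compact complex manifold $G_i^k$, and the restriction of $\omega_k$ to this locus is pulled back from the (smooth) base $\widetilde{X}_i\setminus\widetilde{X}_{i-1}$. Combining the multiplicativity of the Euler characteristic in such a fibration with the behaviour of the index (obstruction) of a $1$-form under pull-back --- and using that $G_i^k$ has even real dimension --- one gets $\nu_i=\chi(G_i^k)\cdot\mu_i$, where $\mu_i$ is the corresponding contribution of a generic deformation of $\omega$ on the smooth variety $\widetilde{X}_i\setminus\widetilde{X}_{i-1}$, but with indices taken in the ambient dimension $\dim X$; since reinterpreting a zero from dimension $\dim X_i$ to dimension $\dim X$ multiplies its index by $(-1)^{\dim X-\dim X_i}$, Proposition~1 (applied to the essential smoothing $\widetilde{X}_i$ of $(X_i,0)$, whose non-singular part is $\widetilde{X}_i\setminus\widetilde{X}_{i-1}$) gives $\mu_i=(-1)^{\dim X-\dim X_i}\,\indPHN (\omega;X_i,0)$, so that $\indPHk (\omega;X,0)=(-1)^{\dim X}\sum_{i=1}^{t}\chi(G_i^k)\cdot(-1)^{\dim X_i}\indPHN (\omega;X_i,0)$. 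In particular the only $k$-dependence is through the factors $\chi(G_i^k)$.

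It remains to rewrite $(-1)^{\dim X_i}\indPHN (\omega;X_i,0)$ in terms of radial indices and the Euler characteristics $\chi(X_i,0)$. Here one again invokes the law of conservation of number, now for the radial index on the essential smoothings $\widetilde{X}_i$: one has $\indrad (\omega;X_i,0)=\sum_q\indrad (\omega;\widetilde{X}_i,q)$ over the zeros $q$ of $\omega$ on $\widetilde{X}_i\cap B_\eps$, and by Proposition~1 the number $\indPHN (\omega;X_i,0)$ is the signed count of zeros of a generic deformation of $\omega$ on $(\widetilde{X}_i)_{\rm reg}=\widetilde{X}_i\setminus\widetilde{X}_{i-1}$. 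The two counts agree on the zeros lying in $(\widetilde{X}_i)_{\rm reg}$, so their difference is localized along the singular locus $\widetilde{X}_{i-1}$ of $\widetilde{X}_i$; because $\widetilde{X}_i$ has one and the same transverse structure everywhere along $\widetilde{X}_{i-1}\cap B_\eps$, this difference is governed by the radial index of $\omega$ on $X_{i-1}$, while the ``bulk'' of the count on $(\widetilde{X}_i)_{\rm reg}$ equals $\chi\big((\widetilde{X}_i\setminus\widetilde{X}_{i-1})\cap B_\eps\big)=\chi(X_i,0)-\chi(X_{i-1},0)$. Collecting these contributions yields
\begin{eqnarray*}
(-1)^{\dim X_i}\indPHN (\omega;X_i,0) & = & (-1)^{\dim X_i}\indrad (\omega;X_i,0)-(-1)^{\dim X_{i-1}}\indrad (\omega;X_{i-1},0)\\
& & {}+\,\chi(X_i,0)-\chi(X_{i-1},0)\,,
\end{eqnarray*}
with the conventions $X_0=\{0\}$, $\indrad (\omega;X_0,0)=1$, $\chi(X_0,0)=0$ taking care of the case $i=1$.

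Substituting this into the expression for $\indPHk (\omega;X,0)$ obtained in the second step gives the asserted formula, uniformly in $k=1,2,3$. (As a consistency check, replacing all $\chi(G_i^k)$ by $1$ makes the right-hand side telescope to $\indrad (\omega;X,0)+(-1)^{\dim X}(\chi(X,0)-1)$, the Poincar\'e--Hopf-type index one would expect from a ``trivial resolution''.) I expect the main obstacle to be the third step: one has to localize correctly, via the law of conservation of number, the difference between the generic-deformation count on $(\widetilde{X}_i)_{\rm reg}$ and the radial-index count along the singular locus $\widetilde{X}_{i-1}$, and to track carefully all the signs coming from the holomorphic versus radial index conventions and from the several ambient dimensions that occur. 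A subsidiary technical point is the justification of $\nu_i=\chi(G_i^k)\mu_i$ in the second step: the zero locus of $\omega_k$ over a singular stratum need not be the whole fibre $G_i^k$ but only a subvariety of it, so the factor $\chi(G_i^k)$ has to be extracted from the fibration structure together with conservation of number rather than from a naive product formula.
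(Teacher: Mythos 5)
Your overall strategy --- localize the zeros of the perturbed lifted form over the strata $\widetilde X_i\setminus\widetilde X_{i-1}$ of an essential smoothing and extract the factor $\chi(G_i^k)$ from the fibre of the resolution --- is the same as the paper's, but the two identities you use to evaluate the stratum-wise contributions are both false, and the second one is refutable from the paper itself. Your third step asserts
$$\indPHN(\omega;X_i,0)=\indrad(\omega;X_i,0)-(-1)^{\dim X_i-\dim X_{i-1}}\indrad(\omega;X_{i-1},0)+(-1)^{\dim X_i}\bigl(\chi(X_i,0)-\chi(X_{i-1},0)\bigr).$$
This contradicts Proposition~\ref{PropPHN} (equivalently Proposition~\ref{Propnit}) applied to the EIDS $(X_i,0)$ of type $(m,n,i)$: there the PHN-index involves the radial indices of \emph{all} the varieties $X_j$ with $j\le i$, with coefficients $m_{ji}=(-1)^{(m+n+1)(i-j)}\binom{m-j}{m-i}$. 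Already the coefficient of $\indrad(\omega;X_{i-1},0)$ is $(-1)^{m+n+1}(m-i+1)$, whereas yours is $-(-1)^{\dim X_i-\dim X_{i-1}}=(-1)^{m+n}$, and the nonzero terms with $j\le i-2$ are missing from your formula altogether. Since the asserted formula of the Proposition is correct, your second step --- that the contribution of the $i$-th stratum is $\chi(G_i^k)\cdot(-1)^{\dim X-\dim X_i}\indPHN(\omega;X_i,0)$ --- must then be wrong as well; the two errors cancel only because you have mislabelled the same quantity twice.

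The common source of both errors is the point you yourself flag as a ``subsidiary technical point'': which number the base count over $\widetilde X_i\setminus\widetilde X_{i-1}$ produces depends on the normal structure imposed on the deformed $1$-form along the lower strata, and the fibre factor $\chi(G_i^k)$ only comes out when that normal structure is \emph{radial}. The paper chooses a representative $\widetilde\omega$ that near each of its zeros on the $i$-th stratum equals $p_1^\ast\omega'+p_2^\ast\omega''$ with $\omega'$ non-degenerate along the stratum and $\omega''$ radial on the normal slice $(M_{m-i+1,n-i+1}^{t-i+1},0)$; for this choice the zeros on the stratum sum to exactly the bracketed expression in the Proposition (after multiplying by $(-1)^{\dim X_i}$), while the lift of the radial $\omega''$ to the resolution of the normal slice contributes $(-1)^{\dim X-\dim X_i}\chi(G_i^k)$. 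Making the stratum count equal to $\indPHN(\omega;X_i,0)$ instead requires taking the normal structure to be $d\ell$ for a generic linear form $\ell$ --- this is precisely the proof of Proposition~\ref{Propnit}, where the paper explicitly notes that ``the local form of the singular points \dots\ is different from that in the proof of Proposition~\ref{PropPH}'' --- and with that choice the fibre contribution is no longer $\chi(G_i^k)$. You cannot have both simultaneously. To repair the argument, drop the detour through $\indPHN$ and work directly with the radial normal structure on each stratum, as the paper does.
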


\begin{remark}
1. If $N < mn$ several summands in the formula of Proposition~\ref{PropPH} vanish. \\
2. Note that 
$\dim X_i$ is equal to 
$$\max\{ 0, N-(m-i+1)(n-i+1) \}.$$
\end{remark}

\begin{proof}
There exists a 1-form $\widetilde{\omega}$ which coincides with $\omega$ in a neighbourhood of the sphere $S_\eps$ such that it is radial in a neighbourhood of the origin and, in a neighbourhood of each singular point $x_0$ on $X_i \setminus X_{i-1}$, it looks as follows. There exists a local biholomorphism $h: \CC^{\dim X_i} \times M_{m-i+1,n-i+1} \to (\CC^N, x_0)$ which sends $\CC^{\dim X_i} \times M_{m-i+1,n-i+1}^{t-i+1}$ onto $X$ (and therefore $\CC^{\dim X_i} \times \{ 0 \}$ onto $X_i$) and one has $h^\ast \widetilde{\omega} = p_1^\ast \omega' + p_2^\ast \omega''$ where $p_1$ and $p_2$ are the projections of $\CC^{\dim X_i} \times M_{m-i+1,n-i+1}$ to the first and second factor respectively, $\omega'$ is a 1-form on $(\CC^{\dim X_i},0)$ with a non-degenerate zero at the origin (and therefore $\indrad (\omega'; \CC^{\dim X_i},0)= \pm 1$) and $\omega''$ is a radial 1-form on $(M_{m-i+1,n-i+1},0)$. The sum of the indices of the zeros of the 1-form $\widetilde{\omega}$ on $X_i \setminus X_{i-1}$ is equal to 
$$\indrad (\omega; X_i,0) - (-1)^{\dim X_i -\dim X_{i-1}} \indrad (\omega;X_{i-1},0).$$
Let $\widetilde{X}= \widetilde{F}^{-1}(\Mmn^t)$ be an essential smoothing of the singularity $(X,0)$ ($\widetilde{F}$ is small generic pertubation of $F$). Since outside of a small neighbourhood $B_{\eps'}$ of the origin (such that $\widetilde{\omega}$ is radial on its boundary $S_{\eps'}$), $X$ and $\widetilde{X}$ are diffeomorphic to each other (as stratified spaces), we can suppose $\widetilde{\omega}$ to have the same singular points on $\widetilde{X} \setminus B_{\eps'}$. Changing $\widetilde{\omega}$ inside the ball $B_{\eps'}$, we can suppose that all the singular points of the 1-form $\widetilde{\omega}$ on $\widetilde{X}$ are of the type described above. The sum of the indices of the zeros of the 1-form $\widetilde{\omega}$ on $(\widetilde{X}_i \setminus \widetilde{X}_{i-1}) \cap B_{\eps'}$ is equal to
$$(-1)^{\dim \widetilde{X}_i} (\chi(\widetilde{X}_i \cap B_{\eps'}) - \chi(\widetilde{X}_{i-1} \cap B_{\eps'}))= 
(-1)^{\dim \widetilde{X}_i}(\chi(X_i,0) - \chi(X_{i-1},0))$$
(the sign $(-1)^{\dim \widetilde{X}_i}$ reflects the difference between the radial index of a complex 1-form and of its real part, see, e.g., \cite{EGGD}).  Therefore the sum of the indices of the zeros of the 1-form $\widetilde{\omega}$ on the entire stratum $\widetilde{X}_i \setminus \widetilde{X}_{i-1}$ is equal to
\begin{eqnarray*}
\lefteqn{\indrad (\omega; X_i,0) - (-1)^{\dim X_i - \dim X_{i-1}} \indrad (\omega;X_i,0)} \\
&+ & (-1)^{\dim \widetilde{X}_i} (\chi(X_i,0) - \chi(X_{i-1},0)).
\end{eqnarray*}
Applying one of the three possible resolutions of the variety $\widetilde{X}$ over a point of $\widetilde{X}_i \setminus \widetilde{X}_{i-1}$, we glue in a standard fibre $G_i^k$ (a Grassmanian or a product of two of them). The sum of the indices of the zeros of a generic perturbation of the radial 1-form on the corresponding resolution of $M_{m-i+1,n-i+1}^{t-i+1}$ (the normal slice to $\widetilde{X}_i$ in $\widetilde{X}$) is equal to $(-1)^{\dim \widetilde{X} - \dim \widetilde{X}_i} \chi(G_i^k)$. Therefore the corresponding PH-index of the 1-form is given by the formula of Proposition~\ref{PropPH}.
\end{proof}

In the formula expressing the radial index of a 1-form in terms of the Euler obstructions of the 1-form on the different strata \cite{EGGD}, there participate certain integer coefficients $n_i$. Up to sign they are equal to the reduced Euler characteristics of generic hyperplane sections of normal slices of the variety at points of different strata of a Whitney stratification. (The reduced Euler characteristic $\overline{\chi}(Z)$ of a topological space $Z$ is $\chi(Z)-1$.) For a determinantal singularity $X=F^{-1}(\Mmn^t)$ outside of the origin these slices are standard ones: a normal slice to the stratum $X_i \setminus X_{i-1}$ ($X_i=F^{-1}(\Mmn^i)$) is isomorphic to $(M_{m-i+1,n-i+1}^{t-i+1},0)$. Therefore one has the problem to compute the Euler characteristic of a generic hyperplane section of the variety $\Mmn^t$.

\begin{proposition}
Let $\ell: \Mmn \to \CC$ be a generic linear form and let $L_{m,n}^t = \Mmn^t \cap \ell^{-1}(1)$. Then, for $t \leq m \leq n$, one has
$$\overline{\chi}(L_{m,n}^t) = (-1)^t \binom{m-1}{t-1}.$$
\end{proposition}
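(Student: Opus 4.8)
The plan is to exploit the first resolution $\pi_1\colon Y_1\to\Mmn^t$ of Section~1 together with the two standard properties of the Euler characteristic used throughout the paper: additivity over a constructible partition and multiplicativity over locally trivial fibrations (in particular over affine bundles, whose fibre contributes $1$). For a generic linear form $\ell$ the affine hyperplane $\ell^{-1}(1)\subset\Mmn$ meets every stratum $\Mmn^i\setminus\Mmn^{i-1}$ transversally; put $L^i:=\Mmn^i\cap\ell^{-1}(1)$ for $0\le i\le t$, so $L^0=\emptyset$ and $L^t=L_{m,n}^t$. Over the stratum of matrices of rank $i-1$ the resolution $\pi_1$ is a locally trivial bundle whose fibre is $G_i^1={\rm Gr}(n-t+1,n-i+1)$, so restricting $\pi_1$ over $L^t$ and using additivity and multiplicativity gives
$$\chi\bigl(\pi_1^{-1}(L^t)\bigr)=\sum_{i=1}^{t}\bigl(\chi(L^i)-\chi(L^{i-1})\bigr)\binom{n-i+1}{t-i}\,.$$
Applying the same reasoning to the determinantal variety $\Mmn^s$ of type $(m,n,s)$ for each $1\le s\le t$ (the sets $\Mmn^i\setminus\Mmn^{i-1}$ and the slices $L^i$ are unchanged; only the fibre of the resolution becomes ${\rm Gr}(n-s+1,n-i+1)$, with Euler characteristic $\binom{n-i+1}{s-i}$) produces a system of $t$ linear equations which is triangular in the unknowns $\chi(L^1),\dots,\chi(L^t)$.

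Next I would evaluate the left-hand side directly from the structure of $Y_1$, which is a vector bundle of rank $m(t-1)$ over ${\rm Gr}(n-t+1,n)$ with fibre over $W$ the linear space $V_W=\{A\in\Mmn\,|\,A(W)=0\}$. Under the trace pairing the generic $\ell$ corresponds to a generic operator $B\colon\CC^m\to\CC^n$, which is injective since $m\le n$; a direct computation with $\ell(A)={\rm tr}(BA)$ shows that $\ell$ vanishes identically on $V_W$ precisely when ${\rm im}\,B\subset W$. Hence over the open locus $\{W\,|\,{\rm im}\,B\not\subset W\}$ the space $\pi_1^{-1}(L^t)$ is an affine bundle (each fibre an affine hyperplane in $V_W$), while over the complementary locus $\{W\supset{\rm im}\,B\}\cong{\rm Gr}(n-t+1-m,n-m)$ it is empty, so
$$\chi\bigl(\pi_1^{-1}(L^t)\bigr)=\binom{n}{t-1}-\binom{n-m}{t-1}\,,$$
the last binomial coefficient being $0$ when $n-m<t-1$, in agreement with the locus being empty.

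Finally I would solve the triangular system by induction on $t$. The base case $t=1$ is immediate: $\Mmn^1=\{0\}$, so $L^1=\emptyset$ and $\overline{\chi}(L^1)=-1=(-1)^1\binom{m-1}{0}$. For the inductive step, assume $\chi(L^s)=1+(-1)^s\binom{m-1}{s-1}$ for all $s<t$; then $\chi(L^i)-\chi(L^{i-1})$ equals $0$ for $i=1$ and $(-1)^i\binom{m}{i-1}$ for $2\le i\le t-1$, and isolating the $i=t$ term (with coefficient $\binom{n-t+1}{0}=1$) in the $s=t$ equation expresses $\chi(L^t)$ in closed form. The claimed value $\chi(L^t)=1+(-1)^t\binom{m-1}{t-1}$, i.e. $\overline{\chi}(L_{m,n}^t)=(-1)^t\binom{m-1}{t-1}$, then reduces to the Vandermonde-type identity
$$\sum_{j=0}^{t-1}(-1)^j\binom{m}{j}\binom{n-j}{t-1-j}=\binom{n-m}{t-1}\,,$$
which one obtains by extracting the coefficient of $x^{t-1}$ in the identity $(1+x)^{n}\bigl(1-\frac{x}{1+x}\bigr)^{m}=(1+x)^{n-m}$.

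The step requiring the most care is the direct evaluation of $\chi(\pi_1^{-1}(L^t))$: one must verify that the pulled-back form $\ell\circ\pi_1$ is fibrewise non-zero exactly off a sub-Grassmannian (this is where the hypothesis $m\le n$, ensuring $\rk B=m$, enters) and keep track of the several genericity requirements — full rank of $B$, and transversality of $\ell^{-1}(1)$ to all strata of all $\Mmn^s$ with $s\le t$ — each of which is Zariski open and dense, so that a single generic $\ell$ serves for the whole induction. The remaining binomial bookkeeping is routine.
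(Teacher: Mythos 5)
Your argument is correct, but it takes a genuinely different route from the paper's. The paper normalizes $\ell(A)=\sum_i a_{ii}$, stratifies $L_{m,n}^t$ by the set $I$ of indices with $a_{ii}\neq 0$, splits each stratum as a product $B_I\times C_I$, and evaluates $\chi(C_I)$ via the fixed points of a $(\CC^\ast)^n$-action, arriving at the answer by a single inclusion--exclusion summation with no induction. You instead compute $\chi(\pi_1^{-1}(L_{m,n}^t))$ in two ways: once through the stratified fibration structure of the resolution $\pi_1$ over the rank strata (yielding the triangular system in the unknowns $\chi(L^s)$, $s\le t$, with diagonal entries $\binom{n-s+1}{0}=1$), and once through the vector-bundle structure of $Y_1$ over ${\rm Gr}(n-t+1,n)$. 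The key geometric point of the second computation checks out: under the trace pairing the annihilator of $V_W\cong{\rm Hom}(\CC^n/W,\CC^m)$ is exactly $\{B\mid {\rm im}\,B\subset W\}$, so for ${\rm rk}\,B=m$ the exceptional locus is ${\rm Gr}(n-t+1-m,n-m)$ and $\chi(\pi_1^{-1}(L_{m,n}^t))=\binom{n}{t-1}-\binom{n-m}{t-1}$; the induction on $t$ and the identity $\sum_{j=0}^{t-1}(-1)^j\binom{m}{j}\binom{n-j}{t-1-j}=\binom{n-m}{t-1}$ are also correct (I verified the base case and the generating-function argument). What your approach buys is the reuse of the resolution $Y_1$ already constructed in Section~1, a conceptual explanation of the binomial answer as the inversion of the triangular matrix of Grassmannian Euler characteristics, and, as a byproduct, the values $\chi(\pi_1^{-1}(L^s))$ for all $s$; what the paper's approach buys is brevity and self-containedness, avoiding both the induction on $t$ and the auxiliary Vandermonde-type identity. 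One small remark: for your bookkeeping the only genericity actually needed is ${\rm rk}\,B=m$ (additivity and the fibration structure over the rank strata hold for any $\ell$), so the transversality requirements you list at the end, while harmless, are not load-bearing.
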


\begin{proof}
The linear form $\ell$ can be written as $\ell(A)= {\rm tr}\, CA$ for a certain (generic) $n \times m$-matrix $C$. Since
${\rm tr}\, CA = {\rm tr}\, S_1^{-1} C S_2 S_2^{-1} A S_1$
for invertible matrices $S_1$ and $S_2$ of size $n \times n$ and $m \times m$ respectively, one can suppose that
$$C = \left( \begin{array}{c} I_m \\ 0 \end{array} \right)$$
where $I_m$ is the unit $m \times m$-matrix and $0$ is the zero matrix of size $(n-m) \times m$. Thus one has $\ell(A) = \sum_{i=1}^m a_{ii}$ for $A=(a_{ij})$.

For $I \subset \{1, \ldots , m\}$, $I \neq \emptyset$, let 
$$L_I= \{ A \in L_{m,n}^t \, | \, a_{ii} \neq 0 \mbox{ for } i \in I, a_{ii}=0 \mbox{ for } i \not\in I \}.$$
The space $L_{m,n}^t $ is the union of the spaces $L_I$ and therefore $\chi(L_{m,n}^t)=\sum\limits_I \chi(L_I)$.

Let $I$ as above be fixed and let $k = \# I$ be the number of elements of $I$. Without loss of generality we can suppose that $I=\{1, \ldots , k\}$. Let 
\begin {eqnarray*}
B_I & = & \{ (\alpha_1, \ldots , \alpha_k) \in \CC^k \, | \, \alpha_i \neq 0 , \sum \alpha_i =1 \}, \\
C_I & = & \left\{ A \in L_I \, \left| \, a_{ii}= \frac{1}{k} \mbox{ for } i=1, \ldots , k \right\} \right. .
\end{eqnarray*}
One can see that the space $L_I$ is the direct product of the spaces $B_I$ and $C_I$. The corresponding projections to $B_I$ and $C_I$ are defined by 
\begin{eqnarray*}
p_1(A) & = & (a_{11}, \ldots , a_{kk}), \\
p_2(A) & = & D\left(\frac{1}{ka_{11}}, \ldots , \frac{1}{ka_{kk}}, 1, \ldots , 1\right) A
\end{eqnarray*}
where $D(\alpha_1, \ldots , \alpha_s)$ is the diagonal $s \times s$-matrix with diagonal entries $\alpha_1, \ldots , \alpha_s$.

The inclusion-exclusion formula gives
$$\chi(B_I) = 1 - \binom{k}{1} + \binom{k}{2} \pm \ldots +(-1)^{k-1} \binom{k}{k-1} = (-1)^{k-1}.$$
To compute the Euler characteristic of the space $C_I$, let us consider the action of the group $(\CC^\ast)^n$ on $C_I$ defined by
$$(\lambda_1, \ldots , \lambda_n) \ast A = D(\lambda_1^{-1}, \ldots , \lambda_m^{-1}) A D(\lambda_1, \ldots , \lambda_n)$$
for $\lambda_i \in \CC^\ast$. The Euler characteristic of the space $C_I$ is equal to the Euler characteristic of the set of fixed points of the action. One can see that, if $k < t$, the only fixed point of this action is the $m \times n$-matrix
$$\left( \begin{array}{cc} \frac{1}{k} I_k & 0 \\0 & 0 \end{array} \right),$$
if $k \geq t$, the action has no fixed points. Therefore
$$\chi(C_I) = \left\{ \begin{array}{cl} 1 & \mbox{for } k < t, \\
0 & \mbox{for } k \geq t. \end{array} \right.$$
Summarizing we have
$$\overline{\chi}(L_{m,n}^t) = \sum_{k=1}^{t-1} (-1)^{k-1} \binom{m}{k}-1 = \sum_{k=0}^{t-1} (-1)^{k-1} \binom{m}{k} = (-1)^t \binom{m-1}{t-1}.$$
\end{proof}

Following \cite{EGGD}, for $i \leq j$, let
\begin{eqnarray*}
n_{ij} & := & \indrad (d\ell; M_{m-i+1,n-i+1}^{j-i+1},0) = (-1)^{d_{ij}-1} \overline{\chi}(L_{m-i+1,n-i+1}^{j-i+1}) \\ & = & (-1)^{(m+n)(j-i)} \binom{m-i}{m-j},
\end{eqnarray*}
where $d_{ij}$ is the dimension of $M_{m-i+1,n-i+1}^{j-i+1}$ equal to $(m-i+1)(n-i+1)-(m-j+1)(n-j+1)$.

\begin{proposition} \label{Propnit}
One has
$$\indrad (\omega;X,0) = \sum_{i=1}^t n_{it} \indPHN (\omega; X_i,0) + (-1)^{\dim X -1} \overline{\chi}(X,0).$$
\end{proposition}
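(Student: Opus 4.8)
The plan is to deduce the formula from the general identity of \cite{EGGD} that expresses the radial index of a $1$-form as a $\ZZ$-linear combination of the local Euler obstructions of the $1$-form on the closures of the strata of a Whitney stratification, applied to an EIDS. Write $\ind_{\rm Eu}$ for the local Euler obstruction of a $1$-form. The formula of \cite{EGGD} reads
$$\indrad(\omega;V,0)=\sum_{\alpha}n_{\alpha}\,\ind_{\rm Eu}(\omega;\overline{V_{\alpha}},0),$$
the sum being over the strata $V_{\alpha}$ of a Whitney stratification of $(V,0)$, with $\overline{V_{\alpha}}$ the closure of $V_{\alpha}$, $n_{\alpha}=1$ for the open dense stratum, and in general $n_{\alpha}=\indrad(d\ell;N_{\alpha},0)$ for the normal slice $N_{\alpha}$ of $V$ along $V_{\alpha}$ --- a number which, up to sign, is the reduced Euler characteristic of a generic hyperplane section of $N_{\alpha}$. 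I would apply this to $(X,0)=F^{-1}(\Mmn^{t})$ equipped with the stratification $X=\bigsqcup_{i=1}^{t}(X_{i}\setminus X_{i-1})$ together with the origin $X_{0}=\{0\}$, where $X_{i}=F^{-1}(\Mmn^{i})$.

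Next I would read off the coefficients of the positive-dimensional strata. Since $(X,0)$ is an EIDS, every point of $X_{i}\setminus X_{i-1}$ in a punctured neighbourhood of the origin is essentially non-singular of type $i$, so --- exactly as in the proof of Proposition~1 --- the normal slice of $X$ along that stratum is biholomorphic to the standard determinantal germ $(M_{m-i+1,n-i+1}^{t-i+1},0)$. Hence the coefficient of $\ind_{\rm Eu}(\omega;X_{i},0)$ is $\indrad(d\ell;M_{m-i+1,n-i+1}^{t-i+1},0)=n_{it}$, which by the preceding computation of $\overline{\chi}(L_{m,n}^{t})$ equals $(-1)^{(m+n)(t-i)}\binom{m-i}{m-t}$; in particular the open stratum has coefficient $n_{tt}=1$. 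The origin stratum $\{0\}$ contributes only a constant multiple of $\ind_{\rm Eu}(\omega;\{0\},0)=1$.

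It then remains to replace each $\ind_{\rm Eu}(\omega;X_{i},0)$ by $\indPHN(\omega;X_{i},0)$ and to evaluate the origin-stratum constant. Each $\widetilde X_{i}=\widetilde F^{-1}(\Mmn^{i})$ is an essential smoothing of the EIDS $(X_{i},0)$ of type $(m,n,i)$, and by the Proposition preceding this one, together with \cite[Proposition~2.3]{STV} (see also \cite[Proposition~2.1]{EGChern}), $\indPHN(\omega;X_{i},0)$ is the number of non-degenerate zeros, on the regular part $\widetilde X_{i}\setminus\widetilde X_{i-1}$ of $\widetilde X_{i}$, of a generic deformation of $\omega$. The difference $\ind_{\rm Eu}(\omega;X_{i},0)-\indPHN(\omega;X_{i},0)$ is carried by zeros localised over the singular locus $\widetilde X_{i-1}$, where neighbourhoods again have the standard product form $\CC^{\dim X_{j}}\times M_{m-j+1,n-j+1}^{t-j+1}$ ($j<i$); by multiplicativity of the indices under this product and downward induction on $t$ it is controlled by the $\indPHN(\omega;X_{j},0)$ with $j<i$ and by standard Euler characteristics. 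Collecting all the strata with the weights $n_{it}$, and using the law of conservation of number --- satisfied by all the indices involved and by $\chi$ --- to identify the residual, $\omega$-independent term with $(-1)^{\dim X-1}\overline{\chi}(X,0)$, one arrives at the asserted formula.

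The step I expect to be the main obstacle is this last bookkeeping: verifying that, summed over all strata with the weights $n_{it}$, the discrepancies between the Euler obstructions on the closed strata $X_{i}$ and the PHN-indices defined here via essential smoothings, together with the origin-stratum constant, collapse to exactly the single term $(-1)^{\dim X-1}\overline{\chi}(X,0)$. Concretely this should reduce to a combinatorial identity among the binomials $\binom{m-i}{m-t}$, the Euler characteristics $\chi(G_{i}^{k})$ of the Grassmannian fibres and those of the hyperplane sections $L^{t-i+1}_{m-i+1,n-i+1}$, which I would prove by induction on $t$. A more hands-on alternative, paralleling the proof of Proposition~1, is to choose $\widetilde\omega$ equal to $\omega$ near $S_{\eps}$, radial near the origin and of standard product form near each singular point of an essential smoothing, to compute $\indrad(\omega;X,0)$ and the $\indPHN(\omega;X_{i},0)$ stratum by stratum as sums of local contributions, and to reassemble.
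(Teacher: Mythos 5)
Your primary route has a genuine gap at precisely the point you flag as ``the main obstacle'', and that obstacle is not bookkeeping: it is the substance of the proposition. The formula of \cite{EGGD} expresses $\indrad(\omega;X,0)$ through the local Euler obstructions ${\rm ind}_{\rm Eu}(\omega;X_i,0)$ of the closures of the strata of $X$ itself, i.e.\ through invariants computed on the Nash transforms of the undeformed germs $X_i=F^{-1}(\Mmn^i)$; the coefficients $n_{it}$ you extract from the normal slices are correct. But $\indPHN(\omega;X_i,0)$ is computed on the Nash transform of an \emph{essential smoothing} $\widetilde X_i=\widetilde F^{-1}(\Mmn^i)$, and the discrepancy between ${\rm ind}_{\rm Eu}(\omega;X_i,0)$ and $\indPHN(\omega;X_i,0)$ involves the radial indices and the Euler characteristics $\overline{\chi}(X_j,0)$ of all the subvarieties $X_j$ with $j\le i$ (compare Proposition~\ref{PropPHN}): it is neither $\omega$-independent stratum by stratum nor obviously telescoping. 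Declaring that these discrepancies, weighted by $n_{it}$, collapse to exactly $(-1)^{\dim X-1}\overline{\chi}(X,0)$ by a combinatorial identity you do not exhibit is assuming the conclusion; the conservation law by itself does not supply that identification, and the proposed ``downward induction on $t$'' is not set up.

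The paper's proof never touches the Euler obstructions of the $X_i$. It works directly on one essential smoothing $\widetilde X$ of $(X,0)$ and builds a 1-form $\widetilde\omega$ equal to $\omega$ near $S_\eps$ whose zeros on each stratum $\widetilde X_i\setminus\widetilde X_{i-1}$ have the local product form $p_1^\ast\omega'+p_2^\ast d\ell$, with $\omega'$ non-degenerate along the stratum and $\ell$ a generic linear form on the normal slice $M_{m-i+1,n-i+1}$. Two observations then finish the argument: (a) the signed number of such zeros on $\widetilde X_i\setminus\widetilde X_{i-1}$ is exactly $\indPHN(\omega;X_i,0)$, by the Proposition identifying the PHN-index with the number of non-degenerate zeros of a generic deformation of $\omega$ on $\widetilde F^{-1}(\Mmn^i\setminus\Mmn^{i-1})$; (b) each such zero contributes $\indrad(d\ell;M_{m-i+1,n-i+1}^{t-i+1},0)=n_{it}$ to the total radial count on $\widetilde X$, by multiplicativity of the radial index under the product. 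Comparing $\sum_{i=1}^t n_{it}\indPHN(\omega;X_i,0)$ with the global value $\indrad(\omega;X,0)+(-1)^{\dim X}\overline{\chi}(X,0)$ given by conservation of number yields the formula. Your closing sentence sketches exactly this ``hands-on alternative'', but only as an intention; note also that the normal-direction factor must be $d\ell$ for a generic linear $\ell$ (not a radial form, as in the proof of Proposition~\ref{PropPH}), since that is what makes both (a) and (b) work.
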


\begin{proof} Consider the 1-form $\omega$ on the essential smoothing $\widetilde{X}$ of the singularity $(X,0)$. There exists a 1-form $\widetilde{\omega}$ which coincides with $\omega$ in a neighbourhood of the sphere $S_\eps$ such that in a neighbourhood of each singular point $x_0$ on $\widetilde{X}_i \setminus \widetilde{X}_{i-1}$ it looks as follows. There exists a local biholomorphism $h : (\CC^{\dim \widetilde{X}_i} \times M_{m-i+1,n-i+1},0) \to (\CC^N,x_0)$ which sends $\CC^{\dim \widetilde{X}_i} \times M_{m-i+1,n-i+1}^{t-i+1}$ onto $\widetilde{X}$ (and therefore $\CC^{\dim \widetilde{X}_i} \times \{ 0\}$ onto $\widetilde{X}_i$) and one has $h^\ast \widetilde{\omega} = p_1^\ast \omega' + p_2^\ast d \ell$ where $p_1$ and $p_2$ are the projections of $\CC^{\dim \widetilde{X}_i} \times M_{m-i+1,n-i+1}$ to the first and second factor respectively, $\omega'$ is a 1-form on $(\CC^{\dim \widetilde{X}_i},0)$ with a non-degenerate zero at the origin (and therefore $\indrad (\omega';\CC^{\dim \widetilde{X}_i},0)= \pm 1$),
and $\ell$ is a generic linear form on $M_{m-i+1,n-i+1}$. Note that the local form of the singular points of the 1-form $\widetilde{\omega}$ here is different from that in the proof of Proposition~\ref{PropPH}. The sum of the indices of the zeros of the 1-form $\widetilde{\omega}$ on $\widetilde{X}_i \setminus \widetilde{X}_{i-1}$ coincides with the PHN-index $\indPHN (\omega;X_i,0)$ of the 1-form $\omega$ on the variety $X_i$. Therefore the sum of the radial indices of the singular points of the 1-form $\widetilde{\omega}$ on $\widetilde{X}$ is equal to
$$\sum_{i=1}^t \indPHN (\omega;X_i,0) \cdot \indrad (d \ell; M_{m-i+1,n-i+1}^{t-i+1},0) = \sum_{i=1}^t \indPHN (\omega;X_i,0) n_{it}.$$
On the other hand it is equal to
$$\indrad (\omega;X,0) + (-1)^{\dim X}(\chi(\widetilde{X})-1) = \indrad (\omega;X,0) +(-1)^{\dim X} \overline{\chi}(X,0).$$
\end{proof}

Let $\cal N$ be the upper triangular $t \times t$-matrix with the entries $n_{ij}$ for $i \leq j \leq t$, and let ${\cal M}=(m_{ij})= {\cal N}^{-1}$. One can see that, for $i \leq j \leq t$,
$$m_{ij} = (-1)^{(m+n+1)(j-i)} \binom{m-i}{m-j}.$$
(This follows from the known formula $\sum\limits_{k=r}^s (-1)^k \binom{s}{k} \binom{k}{r}=0$ for $0 \leq r <s$, see, e.g., \cite[(2.1.4)]{Ha}.)

The inverse to Proposition~\ref{Propnit} is the following statement.

\begin{proposition} \label{PropPHN}
One has
$$\indPHN (\omega;X,0) = \sum_{i=1}^t m_{it} \left( \indrad (\omega; X_i,0) + (-1)^{\dim X_i} \overline{\chi}(X_i,0) \right).$$
\end{proposition}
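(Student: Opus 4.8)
The plan is to obtain Proposition~\ref{PropPHN} from Proposition~\ref{Propnit} by inverting a triangular linear system, which is what ``the inverse to Proposition~\ref{Propnit}'' refers to. The key remark is that Proposition~\ref{Propnit} applies not only to $(X,0)=(X_t,0)$ but to each sub-EIDS $(X_j,0)=F^{-1}(\Mmn^j)$, $j=1,\dots,t$, which is itself an EIDS of type $(m,n,j)$: its essential smoothing inside $\widetilde{X}$ is $\widetilde{X}_j=\widetilde{F}^{-1}(\Mmn^j)$, its Whitney strata are $X_1\setminus X_0,\dots,X_j\setminus X_{j-1}$, the normal slice to $X_i\setminus X_{i-1}$ inside $X_j$ is the standard model $(M_{m-i+1,n-i+1}^{j-i+1},0)$, and $\omega|_{X_j}$ has an isolated singular point at the origin (its restriction to each stratum $X_i\setminus X_{i-1}$ with $i\le j$ has no zeros in a punctured neighbourhood of $0$). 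One should also note that $\indPHN(\omega;X_i,0)$ depends only on $F$ and $i$, not on which determinantal variety one views $X_i$ inside. Re-running the argument of the proof of Proposition~\ref{Propnit} for $X_j$, a singular point of the correspondingly chosen $1$-form $\widetilde{\omega}$ lying on $\widetilde{X}_i\setminus\widetilde{X}_{i-1}$ receives the local weight $\indrad(d\ell;M_{m-i+1,n-i+1}^{j-i+1},0)=n_{ij}$, and one gets, for every $j=1,\dots,t$,
$$\indrad(\omega;X_j,0)=\sum_{i=1}^j n_{ij}\,\indPHN(\omega;X_i,0)+(-1)^{\dim X_j-1}\overline{\chi}(X_j,0).$$

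Next I would set $a_j:=\indrad(\omega;X_j,0)+(-1)^{\dim X_j}\overline{\chi}(X_j,0)$ and $b_i:=\indPHN(\omega;X_i,0)$. Since $(-1)^{\dim X_j}=-(-1)^{\dim X_j-1}$, moving the reduced-Euler-characteristic term to the left turns the displayed identities into the triangular system $a_j=\sum_{i=1}^j n_{ij}b_i$, $j=1,\dots,t$, with coefficient matrix ${\cal N}=(n_{ij})$. Multiplying the $j$-th equation by $m_{jt}$, summing over $j$, interchanging the order of summation, and using that ${\cal M}=(m_{ij})={\cal N}^{-1}$, i.e.\ $\sum_{j=i}^{t}n_{ij}m_{jt}=({\cal N}{\cal M})_{it}=\delta_{it}$, one finds
$$\sum_{j=1}^{t}m_{jt}\,a_j=\sum_{i=1}^{t}b_i\Big(\sum_{j=i}^{t}n_{ij}m_{jt}\Big)=b_t=\indPHN(\omega;X,0).$$
Substituting the definition of $a_j$ back in and relabelling $j$ as $i$ yields precisely the formula of Proposition~\ref{PropPHN}. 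The closed form $m_{ij}=(-1)^{(m+n+1)(j-i)}\binom{m-i}{m-j}$ has already been recorded just before the statement, so no further computation is needed on the coefficient side.

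The step that will require the most care is the first one: one must check that re-running the proof of Proposition~\ref{Propnit} for the smaller variety $X_j$ really installs the coefficient $n_{ij}$ — and not $n_{it}$ — in front of $\indPHN(\omega;X_i,0)$. This comes down to the fact that the normal slice to the stratum $X_i\setminus X_{i-1}$ inside $X_j$ is a determinantal singularity of type $(m-i+1,n-i+1,j-i+1)$, so the radial $1$-form $d\ell$ on it has index $\indrad(d\ell;M_{m-i+1,n-i+1}^{j-i+1},0)=n_{ij}$ by the definition of the $n_{ij}$, while the local normal form of $\widetilde{\omega}$ near such a point, $p_1^\ast\omega'+p_2^\ast d\ell$, is exactly the one used in the proof of Proposition~\ref{Propnit}. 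Once this bookkeeping is in place the rest is the purely formal triangular inversion carried out above.
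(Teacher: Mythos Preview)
Your proposal is correct and is precisely the argument the paper has in mind: the paper states Proposition~\ref{PropPHN} as ``the inverse to Proposition~\ref{Propnit}'' and records ${\cal M}={\cal N}^{-1}$ just before, leaving the triangular inversion to the reader. Your write-up simply makes this explicit, including the necessary observation that Proposition~\ref{Propnit} applies verbatim to each $(X_j,0)$ with coefficients $n_{ij}$ and that $\indPHN(\omega;X_i,0)$ is intrinsic to $X_i$.
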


\section{Isolated determinantal singularities}
For isolated determinantal singularities, the relations between the PH-, the PHN- and the radial indices simplify but are somewhat different in the (determinantally) smoothable and in the non-smoothable cases (i.e.\ for $N < (m-t+2)(n-t+2)$ and $N=(m-t+2)(n-t+2)$ respectively).

For isolated smoothable singularities all Poincar\'e--Hopf indices (including the Poincar\'e--Hopf-Nash index) coincide and they are equal to
$$\indPHs (\omega; X,0) =  \indrad (\omega; X,0) + (-1)^{\dim X} \overline{\chi}(X,0).$$

Let $(X,0)=F^{-1}(\Mmn^t) \subset (\CC^N,0)$ be an isolated non-smoothable determinantal singularity, i.e.\ $N=(m-t+2)(n-t+2)$. The singular locus $\widetilde{X}_{t-1}= \widetilde{F}^{-1}(\Mmn^{t-1})$ of an essential smoothing $\widetilde{X}= \widetilde{F}^{-1}(\Mmn^t)$ of the singularity $(X,0)$ consists of isolated points. The number of these points is the Euler characteristic $\chi(X_{t-1},0)$ of the stratum $\widetilde{X}_{t-1}$. Therefore the relations from Propositions~\ref{PropPH} and \ref{PropPHN} reduce to the following ones.

\begin{proposition}
For $N=(m-t+2)(n-t+2)$, the relation between the PH- and the radial index reduces to
$$
\indPHk (\omega;X,0)= \indrad (\omega;X,0) + (-1)^{\dim X} \left( \overline{\chi}(X,0)+ \chi(X_{t-1},0)\overline{\chi}(G_{t-1}^k) \right)
$$
where
$$\overline{\chi}(G_{t-1}^1)=n-t+1, \quad \overline{\chi}(G_{t-1}^2)=m-t+1, \quad \overline{\chi}(G_{t-1}^3)=(m-t+2)(n-t+2)-1$$
and $k=1,2,3$.
The relation between the PHN- and the radial index reduces to
\begin{eqnarray*}
\lefteqn{\indPHN (\omega;X,0)} \\
& = & \indrad (\omega;X,0) + (-1)^{\dim X} \overline{\chi}(X,0)+ (-1)^{m+n+1}(m-t+1) \chi(X_{t-1},0).
\end{eqnarray*}

\end{proposition}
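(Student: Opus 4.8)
The plan is to derive both relations by specialising Propositions~\ref{PropPH} and~\ref{PropPHN} to the boundary case $N=(m-t+2)(n-t+2)$ and seeing which summands survive. First I would do the dimension count: writing $p=m-t+1$ and $q=n-t+1$ one has $\dim X=N-pq=p+q+1=m+n-2t+3$ (in particular $(-1)^{\dim X}=(-1)^{m+n+1}$), while by the formula recorded in the Remark after Proposition~\ref{PropPH} one has $\dim X_i=\max\{0,N-(m-i+1)(n-i+1)\}=0$ for every $i\le t-1$, because there $(m-i+1)(n-i+1)\ge(m-t+2)(n-t+2)=N$. Moreover, since $(X,0)$ has an isolated singularity its singular locus $X_{t-1}=F^{-1}(\Mmn^{t-1})$ is $\{0\}$, and the chain $X_1\subseteq\cdots\subseteq X_{t-1}$ forces $X_i=\{0\}$, hence $\indrad(\omega;X_i,0)=1$, for all $1\le i\le t-1$.

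Next I would describe the corresponding strata of an essential smoothing $\widetilde X=\widetilde F^{-1}(\Mmn^t)$. For $i\le t-2$ the subvariety $\Mmn^{i}\subset\Mmn$ has codimension $(m-i+1)(n-i+1)>N$, so a generic $\widetilde F$ avoids it altogether; thus $\widetilde X_i=\emptyset$ and $\chi(X_i,0)=0$ for $i\le t-2$. For $i=t-1$ the codimension of $\Mmn^{t-1}$ equals $N$ exactly, so $\widetilde X_{t-1}=\widetilde F^{-1}(\Mmn^{t-1})$ is a finite set of reduced points, all lying in the smooth stratum $\Mmn^{t-1}\setminus\Mmn^{t-2}$, and their number is by definition $\chi(X_{t-1},0)$. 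Finally, the generic fibre over a point of that stratum is $G_{t-1}^1={\rm Gr}(n-t+1,n-t+2)\cong\PP^{n-t+1}$, $G_{t-1}^2\cong\PP^{m-t+1}$ and $G_{t-1}^3\cong\PP^{n-t+1}\times\PP^{m-t+1}$ (equivalently, evaluate the formulae for $\chi(G_i^k)$ at $i=t-1$), which gives the stated values $\overline{\chi}(G_{t-1}^1)=n-t+1$, $\overline{\chi}(G_{t-1}^2)=m-t+1$, $\overline{\chi}(G_{t-1}^3)=(m-t+2)(n-t+2)-1$.

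With these ingredients the two formulae drop out by substitution. In Proposition~\ref{PropPH}, every summand with $i\le t-2$ vanishes: there $\dim X_i=\dim X_{i-1}=0$, $\indrad(\omega;X_i,0)=\indrad(\omega;X_{i-1},0)=1$ and $\chi(X_i,0)=\chi(X_{i-1},0)=0$. The summand for $i=t-1$ reduces to $\chi(X_{t-1},0)\,\chi(G_{t-1}^k)$ (the two radial indices cancel and $\chi(X_{t-2},0)=0$), and the summand for $i=t$ is $(-1)^{\dim X}\indrad(\omega;X,0)-1+\chi(X,0)-\chi(X_{t-1},0)$, since $\chi(G_t^k)=1$ and $\indrad(\omega;X_{t-1},0)=1$. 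Multiplying by the overall sign $(-1)^{\dim X}$, replacing $\chi(X,0)-1$ by $\overline{\chi}(X,0)$, and writing $\chi(X_{t-1},0)\bigl(\chi(G_{t-1}^k)-1\bigr)=\chi(X_{t-1},0)\,\overline{\chi}(G_{t-1}^k)$ yields the PH--radial relation. For Proposition~\ref{PropPHN} the summand with $i\le t-2$ is $m_{it}\bigl(\indrad(\omega;X_i,0)+(-1)^{\dim X_i}\overline{\chi}(X_i,0)\bigr)=m_{it}\bigl(1+(0-1)\bigr)=0$; the $i=t$ summand is $\indrad(\omega;X,0)+(-1)^{\dim X}\overline{\chi}(X,0)$ since $m_{tt}=1$; and the $i=t-1$ summand is $m_{t-1,t}\bigl(1+\overline{\chi}(X_{t-1},0)\bigr)=m_{t-1,t}\,\chi(X_{t-1},0)$ with $m_{t-1,t}=(-1)^{m+n+1}\binom{m-t+1}{m-t}=(-1)^{m+n+1}(m-t+1)$, which is precisely the PHN--radial relation.

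I do not expect any genuinely hard step; this is bookkeeping on top of Propositions~\ref{PropPH} and~\ref{PropPHN}. The point that most wants care is the vanishing of the ``lower'' contributions $i\le t-2$: it is not that these strata are missing from $X$ --- the germs $(X_i,0)$ are all the single point $\{0\}$ --- but rather that in the formulae the value $1=\indrad(\omega;X_i,0)$ is exactly cancelled by the reduced Euler characteristic $\overline{\chi}(X_i,0)=-1$ of the \emph{empty} essential smoothing $\widetilde X_i$. One should also keep the sign conventions straight, checking in particular that $(-1)^{\dim X}=(-1)^{m+n-2t+3}=(-1)^{m+n+1}$, so that the two $\chi(X_{t-1},0)$-coefficients in the PHN relation combine consistently with the value of $m_{t-1,t}$.
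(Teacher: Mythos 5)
Your proof is correct and follows exactly the route the paper intends: the paper gives no separate argument for this proposition, merely asserting that it is the specialisation of Propositions~\ref{PropPH} and~\ref{PropPHN} to $N=(m-t+2)(n-t+2)$, and your bookkeeping (in particular the cancellation $\indrad(\omega;X_i,0)+(-1)^{\dim X_i}\overline{\chi}(X_i,0)=1-1=0$ for the empty strata $i\le t-2$, and the evaluation $m_{t-1,t}=(-1)^{m+n+1}(m-t+1)$) carries that out accurately.
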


One has the following formula for $\chi(X_{t-1},0)$ in this case.

\begin{proposition} Let $I_F^{t-1}$ be the ideal in the ring ${\cal O}_{\CC^N,0}$ generated by all the $(t-1) \times (t-1)$-minors of the matrix $F(x)$. Then
$$\chi(X_{t-1},0) = \dim {\cal O}_{\CC^N,0}/I_F^{t-1}.$$
\end{proposition}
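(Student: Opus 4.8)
The plan is to identify $\chi(X_{t-1},0)$ --- the number of points of $\widetilde{F}^{-1}(\Mmn^{t-1})$ near the origin for an essential smoothing --- with the colength of the determinantal ideal $I_F^{t-1}$ by a proper-intersection argument. First I would check that this is genuinely a count of reduced points. Since $N=(m-t+2)(n-t+2)=\codim\Mmn^{t-1}$ while $\codim\Mmn^{t-2}=(m-t+3)(n-t+3)>N$, transversality of $\widetilde{F}$ to the strata of $\Mmn^{t}$ makes $\widetilde{F}^{-1}(\Mmn^{t-2})$ empty near the origin and $\widetilde{X}_{t-1}=\widetilde{F}^{-1}(\Mmn^{t-1})=\widetilde{F}^{-1}(\Mmn^{t-1}\setminus\Mmn^{t-2})$ a finite set; at each of its points $\widetilde{F}$ is transverse to the smooth (reduced) stratum $\Mmn^{t-1}\setminus\Mmn^{t-2}$ and of complementary dimension, so the local preimage of $\Mmn^{t-1}$ is a single reduced point. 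Hence $\chi(X_{t-1},0)=\chi(\widetilde{X}_{t-1}\cap B_\eps)$ is just the number of these points.

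Next I would pass from the generic fibre back to the origin. As $(X,0)$ is isolated with $N=(m-t+2)(n-t+2)$, its singular locus $X_{t-1}=F^{-1}(\Mmn^{t-1})$ has dimension $N-(m-t+2)(n-t+2)=0$ and hence equals $\{0\}$ as a germ; so $F$ meets $\Mmn^{t-1}$ properly at the origin, and $I_F^{t-1}=F^{\ast}(I(\Mmn^{t-1}))\cdot\calO_{\CC^N,0}$ has grade $N$ in the regular local ring $\calO_{\CC^N,0}$. By the law of conservation of number, the number of points of $\widetilde{F}^{-1}(\Mmn^{t-1})$ in $B_\eps$, for a generic perturbation $\widetilde{F}$ of $F$, equals the local intersection multiplicity of $F$ with $\Mmn^{t-1}$ at the origin.

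It remains to compute that intersection multiplicity. The determinantal variety $\Mmn^{t-1}\subset\Mmn$ is Cohen--Macaulay (classical; cf.\ \cite{ACGH}), so its prime ideal --- generated by the $(t-1)\times(t-1)$-minors of the generic matrix --- is perfect of grade $N$ and admits a free resolution of length $N$ over $\calO_{\Mmn,0}$. Because the intersection is proper, i.e.\ because $I_F^{t-1}$ has the maximal possible grade $N$, this resolution stays acyclic after being pulled back by $F$ (Buchsbaum--Eisenbud acyclicity); equivalently the higher Tor-terms in Serre's intersection formula vanish, so the local intersection multiplicity equals the length of the plain tensor product $\calO_{\CC^N,0}\otimes_{\calO_{\Mmn,0}}\calO_{\Mmn^{t-1},0}$. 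Substituting the $f_{ij}(x)$ for the generic entries carries a $(t-1)\times(t-1)$-minor of the generic matrix to the corresponding minor of $F(x)$, so this tensor product is $\calO_{\CC^N,0}/I_F^{t-1}$, and the three steps together give $\chi(X_{t-1},0)=\dim\calO_{\CC^N,0}/I_F^{t-1}$. The substantive point is precisely this last step --- that the colength of the pulled-back determinantal ideal already equals the intersection number, with no higher-Tor correction --- which rests on the Cohen--Macaulayness (perfection) of generic determinantal ideals and the persistence of acyclicity of their resolutions when the grade is maximal; Steps~1 and~2 are routine transversality and conservation-of-number considerations.
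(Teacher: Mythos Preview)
Your proof is correct and follows essentially the same approach as the paper: the paper's one-sentence argument invokes exactly the two ingredients you spell out in detail, namely that $\chi(X_{t-1},0)$ is the local intersection number of (the image of) $F$ with the determinantal variety $\Mmn^{t-1}$, and that Cohen--Macaulayness of $\Mmn^{t-1}$ makes this intersection number equal to the colength of the pulled-back ideal. The only difference is that you have unpacked the transversality/conservation-of-number step and the vanishing of higher Tor terms explicitly (and you cite \cite{ACGH} where the paper elsewhere cites \cite{HE} for the Cohen--Macaulay property).
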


\begin{proof} This follows from the fact that $\chi(X_{t-1},0) $ is the intersection number of the image $F(\CC^N)$ of the map $F$ with the Cohen-Macaulay variety $\Mmn^t \subset \Mmn$ defined by vanishing of the $(t-1) \times (t-1)$-minors.
\end{proof}

\section{The homological index for IDS}
For an isolated singular point of a holomorphic 1-form $\omega$ on an isolated singularity $(X,0)\subset (\CC^N,0)$ (of pure dimension $d$) the homological index was defined in \cite{EGS}. The homological index $\indhom\omega=\indhom(\omega; X,0)$ is $(-1)^d$ times the Euler characteristic of the complex
$$
0 \to \Omega^0_{X,0}
\stackrel{\wedge \omega}{\to} \Omega^1_{X,0} \stackrel{\wedge \omega}{\to} \ldots \stackrel{\wedge \omega}{\to} \Omega^d_{X,0} \to 0\,, 
$$
where $\Omega_{X,0}^k$ is the module of holomorphic differential $k$-forms on $(X,0)$, i.e. the module $\Omega_{\CC^N,0}^k$ factorized by the equations of variety $X$ and by the wedge products of their differentials with the module $\Omega_{\CC^N,0}^{k-1}$. For an isolated complete intersection singularity
$(X,0)=\{f_1=\ldots=f_{k}=0\}\subset (\CC^N,0)$ ($k:=N-d$) the homological index $\indhom\omega$ is equal to $\dim_\CC \Omega_{X,0}^d/\omega\wedge \Omega_{X,0}^{d-1}$, it coincides with the PH- (or GSV-) index, and it is equal to the dimension of the factor ring $\calA_{X,\omega}$ of the ring $\calO_{\CC^N,0}$
of germs of functions on $(\CC^N, 0)$ by the ideal generated by the functions $f_i$ and the $(k+1)\times(k+1)$-minors of the matrix
$$
\left( \begin{array}{ccc} \frac{\partial f_1}{\partial x_1} & \cdots &
\frac{\partial f_1}{\partial x_N} \\
\vdots & \ddots & \vdots \\
\frac{\partial f_k}{\partial x_1} & \cdots & \frac{\partial f_k}{\partial
x_N}\\
A_1 & \cdots & A_N
\end{array} \right).
$$

\begin{proposition}
For a holomorphic 1-form $\omega$ on an IDS $(X,0)$ with an isolated singular point (zero) at the origin one has
$$
\indhom\omega =\dim_\CC \Omega_{X,0}^d/\omega\wedge \Omega_{X,0}^{d-1}\,.
$$
\end{proposition}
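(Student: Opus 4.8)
The homological index $\indhom\omega$ is by definition $(-1)^d$ times the Euler characteristic of the complex $(\Omega^\bullet_{X,0},\wedge\omega)$, whose top cohomology module is exactly $\Omega^d_{X,0}/\omega\wedge\Omega^{d-1}_{X,0}$. Hence it is enough to prove that this complex is exact in all degrees $j<d$, and the plan is to run the argument used for an ICIS in \cite{EGS}.

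First I would check that the cohomology sheaves of $(\Omega^\bullet_X,\wedge\omega)$ are concentrated at the origin. An IDS has an isolated singular point, so on $X\setminus\{0\}$ the variety is smooth and $\omega|_X$ is nowhere zero; near such a point one may choose local coordinates in which $\omega=du$, and wedging with $du$ turns $\Omega^\bullet_X$ into a Koszul complex, which is exact in every degree (the top one included). Therefore each $H^j(\Omega^\bullet_{X,0},\wedge\omega)$ is a finite-dimensional $\CC$-vector space; in particular, when nonzero, it has depth $0$ over $\calO_{X,0}$.

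The crux is a depth estimate for the modules of holomorphic forms, namely ${\rm depth}\,\Omega^j_{X,0}\ge d-j$ for $0\le j\le d-1$. For $j=0$ this is the Cohen--Macaulayness of $\calO_{X,0}$, which holds because $X=F^{-1}(\Mmn^t)$ is cut out from the Cohen--Macaulay variety $\Mmn^t$ with the expected codimension; more generally one expects all the modules $\Omega^j_{X,0}$, $0\le j\le d$, to be maximal Cohen--Macaulay, which in the ICIS case is a theorem of Greuel. Granting this, I would invoke the Peskine--Szpiro acyclicity lemma for the complex $0\to\Omega^0_{X,0}\to\cdots\to\Omega^d_{X,0}\to 0$ read homologically: its term in homological degree $i$ is $\Omega^{d-i}_{X,0}$, of depth $\ge i$, while its homology in positive degrees has depth $0$, so the lemma forces $H^j(\Omega^\bullet_{X,0},\wedge\omega)=0$ for $j<d$. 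Equivalently, one can compare the two spectral sequences of the local cohomology functor $H^\bullet_{\{0\}}$ applied to the complex: maximal Cohen--Macaulayness gives $H^q_{\{0\}}(\Omega^j_{X,0})=0$ for $q\ne d$, so the hypercohomology of the complex lives in degrees $\ge d$, whereas the finite length of the cohomology modules confines it to degrees $\le d$; only the top cohomology survives.

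The main obstacle is precisely this depth estimate for $\Omega^j_{X,0}$. In the ICIS case it is Greuel's theorem; for an EIDS with isolated singularity one should deduce it from the Cohen--Macaulayness of the varieties $\Mmn^i$ together with the transversality of $F$ to their strata, and this --- rather than the homological bookkeeping above --- is the technical heart of the proof, since for an arbitrary Cohen--Macaulay isolated singularity the modules $\Omega^j_{X,0}$ need not be Cohen--Macaulay.
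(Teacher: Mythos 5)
Your reduction is fine: $\indhom\omega$ is $(-1)^d$ times the Euler characteristic of $(\Omega^\bullet_{X,0},\wedge\omega)$, the cohomology is supported at the origin because away from $0$ the variety is smooth and $\omega$ is nonvanishing, and the identity to be proved is exactly the vanishing of $H^j$ for $j<d$. But the step you yourself flag as ``the technical heart'' --- the estimate $\mbox{depth}\,\Omega^j_{X,0}\ge d-j$ needed to run Peskine--Szpiro (or, equivalently, the vanishing $H^q_{\{0\}}(\Omega^j_{X,0})=0$ for $q<d-j$ needed for your local-cohomology variant) --- is left unproved, and this is a genuine gap: without it the argument does not close. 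Moreover, the stronger form you invoke as motivation, that all $\Omega^j_{X,0}$ are maximal Cohen--Macaulay, is false already for an ICIS: for a singular plane curve $\Omega^1_{C,0}$ has nontrivial torsion (the invariant $\tau$ appearing later in this very paper), hence depth $0$. Greuel's results for an ICIS give depth bounds only in the range of degrees actually needed, and no analogue is established here for determinantal singularities.

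The paper's proof sidesteps the modules $\Omega^j_{X,0}$ entirely. It quotes Greuel's de Rham Lemma in the form: $\ker\bigl(\wedge\omega:\Omega^p_{X,0}\to\Omega^{p+1}_{X,0}\bigr)=\omega\wedge\Omega^{p-1}_{X,0}$ whenever $p<\mbox{codh}\,\calO_{X,0}-\dim C(\omega,X)$. The hypothesis involves only the homological codimension (depth) of the \emph{ring} $\calO_{X,0}$ and the dimension of the zero set of $\omega$; no Cohen--Macaulayness of the modules of differential forms is required. The two geometric inputs are then exactly the ones you have at hand: $C(\omega,X)=\{0\}$ because the zero is isolated, and $\mbox{codh}\,\calO_{X,0}=d$ because determinantal rings are Cohen--Macaulay by Hochster--Eagon. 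This gives exactness in all degrees $p<d$ at once. So the fix for your argument is not to prove the depth estimate for $\Omega^j_{X,0}$ (which may be delicate or false), but to replace the acyclicity lemma by the de Rham Lemma, whose hypotheses you have already verified in your first paragraph.
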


\begin{proof}
We use the following version of a particular case of the de Rham Lemma from \cite[Lemma 1.6]{Gr}.

\begin{statement}
Let $\omega$ be a holomorphic 1-form on an isolated singularity $(X,0)$.
Then the kernel of the mapping
$$
\wedge\omega:\Omega_{X,0}^p\to \Omega_{X,0}^{p+1}
$$
is equal to $\omega\wedge\Omega_{X,0}^{p-1}$ if $0\le p < \mbox{\rm codh\,}\calO_{X,0}-\dim C(\omega, X)$, where $\mbox{\rm codh\,}\calO_{X,0}$ is the homological codimension of the ring $\calO_{X,0}$ and $C(\omega, X)$ is the set of zeros of the 1-form $\omega$ on $X$.
\end{statement}

In \cite{Gr} the statement is formulated for $\omega=df$ for a holomorphic function $f:(\CC^N,0)\to(\CC,0)$. However (as  G.-M.~Greuel pointed out to us) the proof is also valid for an arbitrary holomorphic 1-form $\omega$.

In our case the 1-form $\omega$ has an isolated singular point on $(X,0)$ and therefore $C(\omega, X)=\{0\}$. Moreover $(X,0)$ (being a determinantal singularity) is Cohen-Macaulay \cite{HE}. Therefore $\mbox{\rm codh\,}\calO_{X,0} =\dim X$.
\end{proof}

For a reduced space curve singularity (which is always determinantal of type $(m, m+1, m)$) V.V.~Goryunov \cite{Gor} has defined a Milnor number for a holomorphic function $f$ with an isolated singularity on the curve. This notion was extended to arbitrary reduced curve singularities by D.~Mond and D.~van~Straten \cite{MvS}. Almost the same definition applies to a holomorphic 1-form with an isolated singular point on the curve giving a number which can be considered as an index (GMvS-index $\indGMvS\omega$) of the 1-form $\omega$. If the curve singularity is smoothable, the GMvS-index coincides with the Poincar\'e--Hopf index defined by a smoothing. (All smoothings of a curve singularity have the same Euler characteristic and therefore the Poincar\'e--Hopf index is independent of the smoothing.)

\begin{sloppypar}

The homological index of a 1-form on a curve singularity $(C,0)$ is, in general, different from the GMvS-index. Let $(\overline C, \overline 0)$ be the normalization of the curve singularity $(C,0)$ and let $\tau=\dim_\CC\ker (\Omega_{C,0}^1\to \Omega_{\overline C, \overline 0}^1)$, $\lambda=\dim_\CC \omega_{C,0}/c(\Omega_{C,0}^1)$, where $\omega_{C,0}$ is the dualizing module of Grothendieck, $c:\Omega_{C,0}^1\to \omega_{C,0}$ is the class map. Then one has
$$
\indhom\omega=\indGMvS\omega -\lambda+\tau
$$
(see, e.g., \cite{EGS}).

\end{sloppypar}

By \cite{MvS}, the GMvS-index of a 1-form $\omega=\sum\limits_{i=1}^3 A_idx_i$ on a space curve singularity $(C,0)$ defined by vanishing of the $m\times m$-minors $\Delta_1$, \dots, $\Delta_{m+1}$ of an $m\times (m+1)$-matrix $F(x)$ is equal to the dimension of the factor algebra $\calA_{C,\omega}$ of the algebra $\calO_{\CC^3,0}$ by the ideal generated by the equations $\Delta_1$, \dots, $\Delta_{m+1}$ of the curve and by the $3\times 3$-minors of the matrix
$$
\left( \begin{array}{ccc} \frac{\partial \Delta_1}{\partial x_1} &   \frac{\partial \Delta_1}{\partial x_2}&
 \frac{\partial \Delta_1}{\partial x_3} \\
\vdots & \vdots & \vdots \\
\frac{\partial \Delta_{m+1}}{\partial x_1} &   \frac{\partial \Delta_{m+1}}{\partial x_2}&
 \frac{\partial \Delta_{m+1}}{\partial x_3} \\
A_1 & A_2 & A_3
\end{array} \right) =: \left( \begin{array}{c} d\Delta_i \\ \omega \end{array} \right).
$$
In \cite{MvS} the statement is formulated for a 1-form $\omega$ being the differential of a holomorphic function, however, the proof can be adapted to the general case. In fact the statement for arbitrary 1-forms can be deduced from the one for differentials of functions.

A natural generalization of the algebras $\calA_{X,\omega}$ for an ICIS $(X,0)$ and $\calA_{C,\omega}$ for a space curve singularity 
$(C,0)$ to a 1-form $\omega=\sum\limits_{i=1}^N A_idx_i$ on an IDS $(X,0)=F^{-1}(\Mmn^t)\subset(\CC^N, 0)$ of type $(m,n,t)$ is the algebra $\calA_{X,\omega}$ 
defined as the factor algebra of the algebra $\calO_{\CC^N,0}$ by the ideal $I_{X,\omega}$ generated by the 
$t \times t$-minors $m_{IJ}^t$ of the matrix $F(x)$ and by the $(c+1)\times(c+1)$-minors of the matrix
$$
\left( \begin{array}{c}  dm_{IJ}^t \\ \omega \end{array} \right)
$$ 
whose rows are the components of the differentials of the minors $m_{IJ}^t$ ($\# I=\# J=t$) and of the 1-form $\omega$, where $c= (m-t+1)(n-t+1)$ is the codimension of the IDS $(X,0)$. The algebra $\calA_{X,\omega}$ has finite dimension as a complex vector space.

\begin{proposition} \label{PropIneq}
For a (determinantally) smoothable IDS $(X,0)$ one has
$$
\dim_\CC \Omega_{X,0}^d/\omega\wedge \Omega_{X,0}^{d-1}\ge \indPHs \omega\,,
$$
$$
\dim_\CC \calA_{X,\omega}\ge \indPHs \omega\,.
$$
\end{proposition}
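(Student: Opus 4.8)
The plan is to derive both inequalities from a single deformation argument, realizing the left-hand sides as fibre dimensions of coherent sheaves over a one-parameter base and applying upper semicontinuity; the point is that over a generic value of the parameter the sheaf in question ``sees'' the zeros of a generic $1$-form on a genuine smoothing of $(X,0)$.

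\emph{The family.} Since $(X,0)$ is (determinantally) smoothable, one can choose a generic one-parameter deformation $F_s$ of $F$ (with $F_0=F$) together with a deformation $\omega_s=\sum_i A_i(x,s)\,dx_i$ of $\omega$ (with $\omega_0=\omega$) such that, for all small $s\neq 0$, the variety $\widetilde X_s:=F_s^{-1}(\Mmn^t)$ is a genuine smoothing (smooth, of dimension $d$) and $\omega_s|_{\widetilde X_s}$ has only non-degenerate zeros, all lying inside the ball $B_\eps$ and none on $S_\eps$. By conservation of number (as in the proofs of Propositions~\ref{PropPH} and \ref{Propnit}) and the fact that for a smoothable {EIDS} all the Poincar\'e--Hopf indices coincide with $\indPHs\omega$ (see Section~3), the number of these zeros equals $\indPHs\omega$. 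Put $\calX=\{(x,s)\in\CC^N\times\CC:\rk F_s(x)<t\}$ with projection $\pi:\calX\to(\CC,0)$; with the scheme structure given by the $t\times t$-minors of $F_s(x)$ one has $\calX_0=X$ (exactly the scheme structure used to define $\Omega^\bullet_{X,0}$ and $\calA_{X,\omega}$) and $\calX_s=\widetilde X_s$ for generic small $s$.

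\emph{First inequality.} Consider the coherent $\calO_\calX$-module $\calF=\mathrm{coker}\bigl(\wedge\omega_\calX:\Omega^{d-1}_{\calX/\CC}\to\Omega^{d}_{\calX/\CC}\bigr)$, where $\omega_\calX$ is the image of $\sum_i A_i(x,s)\,dx_i$ in $\Omega^1_{\calX/\CC}$. Because relative K\"ahler differentials, their exterior powers, the wedge map and cokernels all commute with base change, $\calF\otimes_{\calO_\CC}\CC(s)$ is the analogous cokernel on the fibre $\calX_s$; its stalk over $s=0$ is $\Omega^d_{X,0}/\omega\wedge\Omega^{d-1}_{X,0}$, and for $s\neq 0$ its stalk at a zero $p$ of $\omega_s|_{\widetilde X_s}$ is $\calO_{\widetilde X_s,p}/(A_1,\dots,A_d)$ (the coefficients of $\omega_s$ in a local coframe of the smooth variety $\widetilde X_s$), which is non-zero because $p$ is a zero. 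Hence $\mathrm{Supp}\,\calF$ is finite over $(\CC,0)$, so $\pi_\ast\calF$ is coherent on $(\CC,0)$ with $(\pi_\ast\calF)\otimes\CC(s)=\bigoplus_p(\calF\otimes\CC(s))_p$; by Nakayama, $\dim_\CC$ of this is upper semicontinuous in $s$, it equals $\dim_\CC\Omega^d_{X,0}/\omega\wedge\Omega^{d-1}_{X,0}$ at $s=0$ and is $\ge\#\{\text{zeros}\}=\indPHs\omega$ for generic small $s$. This gives the first inequality.

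\emph{Second inequality.} Repeat the argument with $\calF$ replaced by $\calG=\calO_{\CC^N\times\CC,0}/\widetilde I$, where $\widetilde I$ is generated by the $t\times t$-minors of $F_s(x)$ and by the $(c+1)\times(c+1)$-minors of the matrix whose rows are the components of the differentials $d_x m^t_{IJ}$ and of $\omega_s$. At a smooth point of a fibre $\widetilde X_s$ the rows $d_x m^t_{IJ}$ span the $c$-dimensional conormal space, so the rank condition there says precisely that $\omega_s$ is conormal, i.e.\ that $\omega_s|_{\widetilde X_s}$ vanishes; thus $\mathrm{Supp}\,\calG$ meets each $\widetilde X_s$ ($s\neq 0$) only in the zeros of $\omega_s|_{\widetilde X_s}$, meets $\{s=0\}$ only at the origin, and is finite over $(\CC,0)$. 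Since forming a quotient by an ideal commutes with base change, $\calG\otimes\CC(0)=\calA_{X,\omega}$ while $\calG\otimes\CC(s)$ has a non-zero stalk at every zero $p$; upper semicontinuity of $\dim_\CC(\pi_\ast\calG)\otimes\CC(s)$ then gives $\dim_\CC\calA_{X,\omega}\ge\#\{\text{zeros}\}=\indPHs\omega$.

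\emph{Main obstacle.} The linear-algebra input --- freeness of $\Omega^d$ on the smooth fibres, the conormal-span computation for the minor matrix, and the base-change compatibilities (all instances of right-exactness of $\otimes$ and of $\Lambda^\bullet$) --- is routine and I would merely cite it. The delicate point is the first step: arranging that for generic small $s\neq 0$ the fibre $\widetilde X_s$ is genuinely \emph{smooth} (this is where smoothability of $(X,0)$ enters, and is why the hypothesis is needed) and that the zeros of $\omega_s|_{\widetilde X_s}$ account for exactly $\indPHs\omega$ with none escaping to the boundary --- the same conservation-of-number bookkeeping carried out in the proofs of Propositions~\ref{PropPH} and \ref{Propnit}, which I would reuse.
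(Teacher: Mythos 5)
Your proposal is correct and follows essentially the same route as the paper: both set up a one-parameter determinantal smoothing $F_\lambda$ together with a deformation $\omega_\lambda$ having only non-degenerate zeros on the smooth fibres, form the relative modules $\Omega^d_{Y/\CC}/\langle m_{IJ}\Omega^d_{Y/\CC},\, dm_{IJ}\wedge\Omega^{d-1}_{Y/\CC},\, \check\omega\wedge\Omega^{d-1}_{Y/\CC}\rangle$ and $\calO_Y/I_{Y,\check\omega}$ (your $\calF$ and $\calG$) supported on the curve of zeros, and conclude by upper semicontinuity of the fibre dimension of their pushforwards to the $\lambda$-line. The only cosmetic difference is that you bound each special stalk below by $1$ rather than identifying the generic fibre dimension exactly with $\indPHs\omega$, which still yields the stated inequalities.
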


\begin{proof}
Let $F_\lambda:U\to \Mmn$ be a 1-parameter deformation of the map $F$ ($F_0=F$) such that $X_\lambda=F_\lambda^{-1}(\Mmn^t)$ is a smoothing of the IDS $(X,0)$, and let $\omega_\lambda$ be a 
deformation of the 1-form $\omega$ such that, for $\lambda\in\CC\setminus\{0\}$, the 1-form $\omega_\lambda$ has only non-degenerate zeros on the manifold $X_\lambda$ (the number of these points is the PH-index $\indPHs \omega$ of the 1-form $\omega$). The family $\omega_\lambda$ defines a 1-form $\check{\omega}$ on $Y=U\times \CC$ (without the differential $d\lambda$). The $\calO_{U\times \CC}$-modules 
$$
\Omega_{Y/\CC}^d/\langle m_{IJ} \Omega_{Y/\CC}^d, dm_{IJ}\wedge\Omega_{Y/\CC}^{d-1}, \check{\omega}\wedge\Omega_{Y/\CC}^{d-1} \rangle \quad \mbox{and} \quad
\calO_Y/I_{Y, \check{\omega}}
$$
($\Omega_{Y/\CC}^d:=\Omega_{Y}^d/d\lambda \wedge \Omega_{Y}^{d-1}$) have support on the curve   consisting of singular points of the 1-forms $\omega_\lambda$ on the fibres $X_\lambda$. For each of these sheaves $\calF$ and for 
$y \in Y$, let $\nu(y)$ be the dimension of the vector space $\CC \otimes_{{\cal O}_{\CC, \lambda}} {\cal F}_y$. For $\lambda \neq 0$, $\nu(\lambda)= \sum_{x \in U} \nu(x,\lambda)$ is equal to the PH-index ${\rm ind}_{\rm PH}\, \omega$ of the 1-form $\omega$. Now the statement follows from the semicontinuity of the function $\nu(\lambda)$.
\end{proof}

One can show that, in general, the inequalities in Proposition~\ref{PropIneq} may be strict and the left hand sides of them may also be different.

\begin{example} Let $(X,0) \subset (\CC^4,0)$ be the surface determinantal singularity of type $(2,3,2)$ defined by the matrix
$$\left( \begin{array}{ccc} z & y+u & x \\
u & x & y \end{array} \right)$$
($x,y,z,u$ are the coordinates in $\CC^4$), and let $\omega = du$. Then one has
$${\rm ind}_{\rm PH}\, \omega =3.$$
This follows from the fact that $X$ is the space of a 1-parameter deformation (with parameter $u$) of the space curve singularity $(C,0)$ defined by the matrix
$$\left( \begin{array}{ccc} z & y & x \\
0 & x & y \end{array} \right).$$
A versal deformation of the singularity $(C,0)$ has the form
$$\left( \begin{array}{ccc} z & y+b & x+c \\
a & x & y \end{array} \right)$$
with the discriminant $a(b^2-c^2)=0$ (see, e.g., \cite{FK}). A smoothing of $X$ can be given by taking $a=b=u$, $c=\lambda \neq 0$. The corresponding 1-parameter family in $u$ intersects the discriminant at 3 points where the function $u$ has non-degenerate critical points.

On the other hand
\begin{eqnarray*}
\dim_\CC \Omega^2_{X,0}/\omega \wedge \Omega^1_{X,0} & = & 6, \\
\dim_\CC {\cal A}_{X,\omega} & = & 5.
\end{eqnarray*}
(The calculations were made with the help of the computer algebra system {\sc Singular} \cite{Si}.)
\end{example}

\begin{remark} For a 1-form on a smoothable IDS \ $(X,0) \subset (\CC^N,0)$ with an isolated singular point on $(X,0)$, there is a well defined quadratic form on the module $\Omega^d_{X,0}/\omega \wedge \Omega^{d-1}_{X,0}$ constructed in a way similar to \cite{EGMZ}.
\end{remark}

\bigskip
\noindent Leibniz Universit\"{a}t Hannover, Institut f\"{u}r Algebraische Geometrie,\\
Postfach 6009, D-30060 Hannover, Germany \\
E-mail: ebeling@math.uni-hannover.de\\

\medskip
\noindent Moscow State University, Faculty of Mechanics and Mathematics,\\
Moscow, GSP-1, 119991, Russia\\
E-mail: sabir@mccme.ru
\end{document}